\documentclass[11pt]{article}%
\usepackage[letterpaper, portrait, margin=1in, headheight=0pt]{geometry}
\usepackage{mathtools}
\usepackage{enumerate}
\usepackage{amsmath,amsthm,amssymb,amsfonts}
\usepackage{enumitem}
\usepackage{mathrsfs}
\usepackage{tikz-cd}
\usepackage{multicol}
\setlength{\columnsep}{1.0cm}
\usepackage[title]{appendix}
\usetikzlibrary{quotes,angles}

\DeclareMathOperator{\Hom}{Hom}
\DeclareMathOperator{\Bil}{Bil}

\DeclareMathOperator{\Inf}{Inf}
\DeclareMathOperator{\Tra}{Tra}
\DeclareMathOperator{\Res}{Res}
\DeclareMathOperator{\ima}{Im}

\newcommand{\F}{\mathbb{F}}
\newcommand{\E}{\varepsilon}
\newcommand{\U}{\omega}

\newcommand{\vp}{\varphi}
\newcommand{\LL}{L}

\newcommand{\cZ}{\mathcal{Z}}
\newcommand{\cB}{\mathcal{B}}
\newcommand{\cH}{\mathcal{H}}

\newcommand{\vv}{_{\vdash}}
\newcommand{\dd}{_{\dashv}}
\newcommand{\pp}{_{\perp}}
\newcommand{\FR}{F\lozenge R + R\lozenge F}

\newtheorem{thm}{Theorem}[section]
\newtheorem{lem}[thm]{Lemma}

\newtheorem{cor}[thm]{Corollary}

\theoremstyle{definition}

\newtheorem{ex}[subsection]{Example}

\theoremstyle{remark}

\title{Multipliers and Unicentral Triassociative Algebras}
\author{Erik Mainellis}
\date{}

\begin{document}

\maketitle

\begin{abstract}
    We introduce an analogue of the famous Schur multiplier in the context of associative trialgebras, or triassociative algebras. The latter were first studied by Loday and Ronco in 2001, and are characterized by three operations and eleven relations. The paper highlights an extension-theoretic crossroads of multipliers, covers, and unicentral triassociative algebras. Using theory from previous algebraic contexts as a guide, we develop criteria for when the center of the cover maps onto the center of the algebra. Along the way, we obtain the uniqueness of the cover, two different characterizations of the multiplier, and several exact cohomological sequences.
\end{abstract}

\section{Introduction}
Associative trialgebras, or \textit{triassociative algebras}, and their Koszul dual, tridendriform algebras, were introduced by Loday and Ronco in \cite{loday tri}. Their (co)homology was studied in \cite{yau}, and generalizations of them appeared in \cite{pluri}. Triassociative algebras over a complex vector space were classified for dimensions 1 and 2 in \cite{mainellis tri}. In this paper, we introduce and study the triassociative analogue of the Schur multiplier, a structure that first appeared in the study of group representations \cite{schur}, and its relations with cohomology and extensions. Similar work has been conducted in the context of groups \cite{karp}, Lie algebras \cite{batten}, Leibniz algebras \cite{rogers, mainellis batten}, and diassociative algebras \cite{mainellis batten di}, and multipliers of nilpotent diassociative algebras have also been studied \cite{mainellis nilpotent}. Following the methodology of the diassociative case, we establish a characterization of the multiplier $M(L)$ of a triassociative algebra $L$ in terms of a free presentation and prove that covers of triassociative algebras are unique. We then characterize $M(L)$ by the second cohomology group $\cH^2(L,\F)$ with coefficients in the field and prove a four-part equivalence theorem of conditions for when the center of the cover maps onto the center of the algebra. This happens exactly when the algebra is unicentral. Some of the results in the present paper hold by the same logic as their Lie, Leibniz, or diassociative versions, and so we will sometimes refer to those proofs rather than rewriting them.

\section{Definitions}
Let $\F$ be a field. A triassociative algebra $L = (L,\vdash,\dashv,\perp)$ is an $\F$-vector space equipped with three bilinear products $\vdash,\dashv,\perp:L\times L\xrightarrow{} L$ that satisfy
\begin{align*}
    & (x\vdash y)\vdash z = x\vdash(y\vdash z) && (x\dashv y)\dashv z = x\dashv(y\dashv z) \\ 
    & (x\dashv y)\vdash z = x\vdash (y\vdash z) && (x\dashv y)\dashv z = x\dashv (y\vdash z) \\ 
    & (x\vdash y)\dashv z = x\vdash (y\dashv z) && \text{self} \\
    & (x\perp y)\vdash z = x\vdash(y\vdash z) && (x\dashv y)\dashv z = x\dashv(y\perp z) \\
    & (x\vdash y)\perp z = x\vdash(y\perp z) && (x\perp y)\dashv z = x\perp (y\dashv z) \\
    & (x\dashv y)\perp z = x\perp(y\vdash z) && \text{self} \\
    & (x\perp y)\perp z = x\perp(y\perp z) && \text{self}
\end{align*}
for all $x,y,z\in L$. Here, the columns are chosen to reflect a vertical symmetry that reverses the order of operations and swaps $\vdash$ and $\dashv$. Under this symmetry, three of the axioms are self-reflective. We note that $(L,\vdash,\dashv)$ forms a diassociative algebra, and that each of $(L,\vdash)$, $(L,\dashv)$, and $(L,\perp)$ forms an associative algebra. Subalgebras, ideals, and homomorphisms of triassociative algebras can be defined in the usual way, and the center $Z(L)$ of $L$ is the ideal consisting of all $z\in L$ such that $z*x = x* z = 0$ for all $x\in L$ and $*\in \{\vdash,\dashv,\perp\}$. For any subalgebras $S$ and $T$ of $L$, we denote by $S\lozenge T$ the ideal $S\vdash T + S\dashv T + S\perp T$ and set $S' = S\lozenge S$. Given a pair of triassociative algebras $A$ and $B$, an extension of $A$ by $B$ is a short exact sequence $0\xrightarrow{} A\xrightarrow{} L\xrightarrow{\pi} B\xrightarrow{} 0$ of homomorphisms such that $L$ is a triassociative algebra. One may assume that the homomorphism $A\xrightarrow{} L$ in the extension is the identity map, and we make this assumption throughout the paper. A section of an extension is a linear map $\mu:B\xrightarrow{} L$ such that $\pi\circ \mu = \text{id}_B$, and an extension is called central if $A\subseteq Z(L)$. A central extension is called stem if $A\subseteq L'$.

Let $L$ be a triassociative algebra. We say that a pair $(K,M)$ of triassociative algebras is a defining pair for $L$ if $L\cong K/M$ and $M\subseteq Z(K)\cap K'$. Such a pair is a maximal defining pair if the dimension of $K$ is maximal. In this case, $K$ is called a \textit{cover} of $L$ and $M$ is called the \textit{multiplier} of $L$, denoted by $M(L)$. The multiplier is abelian and thus unique via dimension.

\section{Existence of Universal Elements and Unique Covers}\label{batten 1}
In this section, we obtain the uniqueness of covers for triassociative algebras and characterize the multiplier in terms of a free presentation by following the diassociative case of Section 3 in \cite{mainellis batten di}. The initial pair of lemmas give dimension bounds that are notably different from the Lie, Leibniz, and diassociative cases, as there are simply more possible multiplications for which to account. These lemmas ensure that the members of a defining pair for a finite-dimensional triassociative algebra have bounded dimension.

\begin{lem}\label{dias batten 1.1}
For any triassociative algebra $K$, if $\dim(K/Z(K)) = n$, then $\dim(K')\leq 3n^2$.
\end{lem}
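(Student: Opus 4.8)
The plan is to show that $K'$ is spanned by the finitely many products of a fixed transversal of the center and then simply count those products. First I would choose elements $x_1,\dots,x_n \in K$ whose images $x_1 + Z(K),\dots,x_n + Z(K)$ form a basis of the quotient $K/Z(K)$. The key observation is that each of the three bilinear products descends to $K/Z(K)$: if $z\in Z(K)$, then by definition of the center $z*w = w*z = 0$ for every $w\in K$ and every $*\in\{\vdash,\dashv,\perp\}$, so $(x+z)*y = x*y$ and $x*(y+z) = x*y$. Thus the value of any product $x*y$ depends only on the cosets $x+Z(K)$ and $y+Z(K)$.

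Next I would convert this into a spanning statement. Writing an arbitrary $x\in K$ as $x = \sum_i \lambda_i x_i + z$ with $z\in Z(K)$, bilinearity together with the previous observation shows that each product $x*y$ lies in the span of the products $x_i * x_j$. Since $K' = K\lozenge K = K\vdash K + K\dashv K + K\perp K$ is generated as a vector space by all products $x*y$ with $*\in\{\vdash,\dashv,\perp\}$, it follows that
\[ K' = \spa\{\, x_i \vdash x_j,\ x_i \dashv x_j,\ x_i \perp x_j : 1\le i,j\le n \,\}. \]
Finally I would count this generating set: there are three operations, and for each of them $n^2$ ordered pairs $(i,j)$, so the set has at most $3n^2$ elements, giving $\dim(K')\le 3n^2$.

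There is no serious obstacle here; the proof is essentially a counting argument once the center is quotiented out. The only point worth flagging, and the reason the bound differs from the Lie, Leibniz, and diassociative cases mentioned in the surrounding text, is that there is no antisymmetry or product-identification among the generators to trim the count. All three operations must be tallied independently and no pair $(i,j)$ collapses with $(j,i)$, which is exactly why the factor is $3$ and the exponent on $n$ is $2$, yielding $3n^2$ rather than a smaller quantity.
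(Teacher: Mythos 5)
Your proposal is correct and follows exactly the paper's argument: lift a basis of $K/Z(K)$, use centrality and bilinearity to show the $3n^2$ products $x_i \vdash x_j$, $x_i \dashv x_j$, $x_i \perp x_j$ span $K'$, and count. The paper states the spanning claim without justification, so your proposal simply supplies the routine details the paper omits.
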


\begin{proof}
Let $\{\overline{x_1},\overline{x_2},\dots, \overline{x_n}\}$ be a basis for $K/Z(K)$. Then $\{x_i\vdash x_j, x_i\dashv x_j, x_i\perp x_j ~|~ 1\leq i,j\leq n\}$ is a generating set for $K'$. Thus $\dim(K')\leq 3n^2$.
\end{proof}

\begin{lem}\label{dias batten 1.2}
Let $L$ be a finite-dimensional triassociative algebra with $\dim L = n$ and let $K$ be the first term in a defining pair for $L$. Then $\dim K\leq n(3n+1)$.
\end{lem}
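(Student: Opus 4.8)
The plan is to bound $\dim K$ by splitting it as $\dim K = \dim(K/M) + \dim M$ and controlling each summand separately, exploiting the two containments bundled into the definition of a defining pair, namely $M\subseteq Z(K)$ and $M\subseteq K'$. The first summand is immediate: since $(K,M)$ is a defining pair for $L$, we have $L\cong K/M$, so $\dim(K/M) = \dim L = n$. All the work goes into bounding $\dim M$.

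For $\dim M$, I would use the containment $M\subseteq K'$ together with Lemma \ref{dias batten 1.1}. To invoke that lemma I first need a bound on $\dim(K/Z(K))$. Here the other containment $M\subseteq Z(K)$ enters: it yields a well-defined surjective homomorphism $K/M\xrightarrow{} K/Z(K)$ sending $x+M\mapsto x+Z(K)$, whence $\dim(K/Z(K))\leq \dim(K/M) = n$. Writing $m = \dim(K/Z(K))\leq n$, Lemma \ref{dias batten 1.1} gives $\dim(K')\leq 3m^2\leq 3n^2$. Since $M\subseteq K'$, we conclude $\dim M\leq \dim K'\leq 3n^2$.

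Combining the two bounds gives $\dim K = \dim(K/M) + \dim M \leq n + 3n^2 = n(3n+1)$, as claimed. There is no real obstacle in this argument — it is a short dimension count — so the only thing to be careful about is using each of the two defining-pair containments for its correct purpose: $M\subseteq Z(K)$ is what makes $K/Z(K)$ a quotient of $K/M$ (and hence lets us feed $\dim(K/Z(K))\leq n$ into Lemma \ref{dias batten 1.1}), while $M\subseteq K'$ is what converts the resulting bound on $\dim(K')$ into a bound on $\dim M$.
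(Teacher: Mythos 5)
Your proof is correct and follows essentially the same argument as the paper: both bound $\dim(K/Z(K))\leq \dim(K/M)=n$ using $M\subseteq Z(K)$, then apply Lemma \ref{dias batten 1.1} and the containment $M\subseteq K'$ to get $\dim M\leq 3n^2$, and add the two dimensions. Your only addition is to spell out the surjection $K/M\to K/Z(K)$, which the paper leaves implicit.
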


\begin{proof}
We know that $\dim(K/Z(K)) \leq \dim(K/M) = \dim L = n$ since $M\subseteq Z(K)$. Therefore, $\dim M\leq \dim(K')\leq 3n^2$ via Lemma \ref{dias batten 1.1} since $M\subseteq K'$. We thus have $\dim K = \dim L + \dim M \leq n+3n^2 = n(3n+1)$.
\end{proof}

For the sake of comparison, we provide a table of the analogous dimension bounds for the Lie, Leibniz, associative, and diassociative cases. The subsequent example illustrates that the highest possible dimension bounds of Lemmas \ref{dias batten 1.1} and \ref{dias batten 1.2} can always be obtained.

\begin{center}
    \begin{tabular}{c|c|c}
    Algebra Class & Lemma \ref{dias batten 1.1}; $\dim(K')\leq$ & Lemma \ref{dias batten 1.2}; $\dim K\leq$ \\ \hline
    Lie & $\frac{1}{2}n(n-1)$ & $\frac{1}{2}n(n+1)$ \\ Leibniz & $n^2$ & $n(n+1)$ \\ Associative & $n^2$ & $n(n+1)$ \\ Diassociative & $2n^2$ & $n(2n+1)$ \\ Triassociative & $3n^2$ & $n(3n+1)$
\end{tabular}
\end{center}

\begin{ex}\label{diassociative dimension bound obtained}
Let $L$ be the $n$-dimensional abelian triassociative algebra with basis $\{x_i\}_{i=1,\dots,n}$ and let $M$ be the $3n^2$-dimensional abelian triassociative algebra with basis $\{m_{ij}, s_{ij}, t_{ij}\}_{i,j = 1,\dots n}$. Let $K$ denote the vector space $M\oplus L$ with only nonzero multiplications given by $x_i\vdash x_j = m_{ij}$, $x_i\dashv x_j = s_{ij}$, and $x_i\perp x_j = t_{ij}$ for $i,j=1,\dots, n$. Then $K$ is a triassociative algebra of dimension $n + 3n^2$ and $M = Z(K) = K'$. Clearly $K$ is a cover of $L$ and $M$ is the multiplier since we have maximal possible dimension. Noting that $L=K/Z(K)$, we also obtain $\dim K' = 3n^2$.
\end{ex}

For a finite-dimensional triassociative algebra $L$, let $C(L)$ denote the set of all pairs $(J,\lambda)$ such that $\lambda:J\xrightarrow{} L$ is a surjective homomorphism and $\ker \lambda\subseteq J'\cap Z(J)$. An element $(T,\tau)\in C(L)$ is called a \textit{universal element} in $C(L)$ if, for any $(J,\lambda)\in C(L)$, there exists a homomorphism $\beta:T\xrightarrow{} J$ such that the diagram \[\begin{tikzcd}
T\arrow[r, "\tau"]\arrow[d,swap, "\beta"]& L\\
J \arrow[ur, swap, "\lambda"]
\end{tikzcd}\] commutes, i.e. such that $\lambda\circ \beta = \tau$.

Defining pairs for $L$ correspond to elements of $C(L)$ in a natural way. Indeed, any $(K,\lambda)\in C(L)$ gives rise to a defining pair $(K,\ker \lambda)$. Conversely, any defining pair $(K,M)$ yields a surjective homomorphism $\lambda:K\xrightarrow{} L$ such that $\ker \lambda = M\subseteq Z(K)\cap K'$, and thus $(K,\lambda)\in C(L)$.

As in the previous algebraic contexts, an element $(T,\tau)\in C(L)$ is a universal element if and only if $T$ is a cover. Furthermore, the remaining triassociative analogues of results from Section 3 in \cite{mainellis batten di} follow by similar logic to the diassociative case. We thus obtain the triassociative analogue of the main result.

\begin{thm}\label{dias batten 1.12}
Let $L$ be a finite-dimensional triassociative algebra and let $0\xrightarrow{} R\xrightarrow{} F\xrightarrow{} L\xrightarrow{} 0$ be a free presentation of $L$. Let \[B=\frac{R}{\FR} ~~~~~~ C=\frac{F}{\FR} ~~~~~~ D=\frac{F'\cap R}{\FR}\] Then \begin{enumerate}
    \item All covers of $L$ are isomorphic and have the form $C/E$ where $E$ is the complement to $D$ in $B$.
    \item The multiplier $M(L)$ of $L$ is $D\cong B/E$.
    \item The universal elements in $C(L)$ are the elements $(K,\lambda)$ where $K$ is a cover of $L$.
\end{enumerate}
\end{thm}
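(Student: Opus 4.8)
The plan is to follow the diassociative template: realize the cover inside $C = F/(\FR)$ and control every other element of $C(L)$ by a dimension count obtained from the freeness of $F$. Write $I = \FR$, an ideal contained in both $R$ and $F' = F\lozenge F$ (since $R\subseteq F$ forces $F\lozenge R, R\lozenge F\subseteq F\lozenge F$). Passing to $C = F/I$, one checks directly that $B = R/I$ lies in $Z(C)$, because $C\lozenge B$ and $B\lozenge C$ are images of $F\lozenge R + R\lozenge F = I$ and hence vanish; moreover $C' = (F'+I)/I = F'/I$, so that $D = (F'\cap R)/I = B\cap C'$. The correspondence between defining pairs and elements of $C(L)$, together with the already-established equivalence ``$(T,\tau)$ is universal $\iff$ $T$ is a cover,'' makes part (3) immediate, so the work lies in parts (1) and (2). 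Since $L$ is finite dimensional we may take $F$ finitely generated, whence $D$ is finite dimensional; I then fix a complement $E$ to $D$ in $B$, so $B = D\oplus E$, and since $E\subseteq B\subseteq Z(C)$ it is an ideal of $C$. Set $K = C/E$ with $\lambda:K\to L$ the map induced by the presentation $\pi:F\to L$.

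First I would verify that $(K,\lambda)$ is a defining pair realizing $D$. The kernel of $\lambda$ is the image $B/E$ of $B$ in $K$, and $B/E\cong D$ because $B = D\oplus E$; the quotient satisfies $K/(B/E)\cong C/B\cong F/R\cong L$. Centrality $B/E\subseteq Z(K)$ is inherited from $B\subseteq Z(C)$, and $B/E\subseteq K'$ follows from $D\subseteq C'$, since then $B = D\oplus E\subseteq C'+E$ while $K' = (C'+E)/E$. Thus $(K,\lambda)\in C(L)$ with $\ker\lambda\cong D$ and $\dim K = \dim L + \dim D$.

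The heart of the argument is maximality. Given any $(J,\sigma)\in C(L)$, freeness of $F$ lets me lift $\pi$ through the surjection $\sigma$ to a homomorphism $\theta:F\to J$ with $\sigma\circ\theta = \pi$. Then $\theta(R)\subseteq\ker\sigma\subseteq Z(J)$, so $\theta(I)\subseteq J\lozenge\ker\sigma + \ker\sigma\lozenge J = 0$, and $\theta$ descends to $\bar\theta:C\to J$ with $\bar\theta(B)\subseteq\ker\sigma$. Since $\sigma\circ\bar\theta$ is the surjection $C\to L$, we get $\bar\theta(C)+\ker\sigma = J$; as $\ker\sigma\subseteq Z(J)$ this gives $J' = \bar\theta(C')$, whence $\ker\sigma\subseteq J' = \bar\theta(C')\subseteq\bar\theta(C)$ and therefore $\bar\theta(C) = J$. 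Any element of $\ker\sigma\subseteq\bar\theta(C')$ is then $\bar\theta(c)$ with $c\in C'$ that also lies in $\ker(C\to L) = B$, i.e. $c\in C'\cap B = D$; combined with $\bar\theta(D)\subseteq\bar\theta(B)\subseteq\ker\sigma$ this yields $\bar\theta(D) = \ker\sigma$. Hence $\dim\ker\sigma\leq\dim D$ and $\dim J\leq\dim L + \dim D = \dim K$, so $K$ has maximal dimension among first terms of defining pairs. Thus $K$ is a cover and $M(L)\cong B/E\cong D$, giving (1) and (2). I expect this identification $\bar\theta(D) = \ker\sigma$ — in particular the surjectivity of $\bar\theta$ coming from $\ker\sigma\subseteq J'$ and the centrality of $\ker\sigma$ — to be the main obstacle, since it is precisely where the hypotheses defining $C(L)$ enter and where the $\lozenge$-product manipulations must be checked against the eleven triassociative relations.

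Finally, for the uniqueness asserted in (1), the equivalence makes $K$ a universal element; any other cover $K_1$ is likewise universal, so there are homomorphisms $K\to K_1$ and $K_1\to K$ compatible with the projections to $L$. The same argument as above (centrality of the kernel together with its inclusion in the derived algebra) shows each connecting map is surjective, and since all covers share the maximal dimension $\dim L + \dim D$, a surjection between spaces of equal finite dimension is an isomorphism. Hence every cover is isomorphic to $C/E$, which completes the proof.
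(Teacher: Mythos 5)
Your proof is correct and follows essentially the same route as the paper, whose own proof simply defers to the diassociative template of Section 3 of \cite{mainellis batten di}: realize a cover as $C/E$, establish maximality by lifting the free presentation through any $(J,\sigma)\in C(L)$ to obtain $\bar{\theta}(D)=\ker\sigma$, and deduce uniqueness from universality of covers together with equal finite dimensions. The one repair needed is your parenthetical claim that finite generation of $F$ makes $D$ finite-dimensional: the actual reason is Lemma \ref{dias batten 1.1} applied to $C$ (since $B\subseteq Z(C)$ and $C/B\cong L$, one gets $\dim(C/Z(C))\leq \dim L$, hence $\dim C'\leq 3(\dim L)^2$ and $D\subseteq C'$), or equivalently Lemma \ref{dias batten 1.2} applied to the defining pair $(C/E,\,B/E)$ that you construct.
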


\section{Multipliers and Cohomology}\label{batten 3}
In this section, we obtain a characterization of the multiplier of a triassociative algebra in terms of its second cohomology group with coefficients in $\F$. To this end, we begin with a discussion of second cohomology. Consider a central extension $0\xrightarrow{} A\xrightarrow{} L\xrightarrow{} B\xrightarrow{} 0$ of triassociative algebras $A$ by $B$. In \cite{yau}, the author gives the usual interpretation of $\cH^2(B,A)$ via central extensions. As a slight aside, we can generalize this interpretation to the case of noncentral extensions. Following the methodology of \cite{mainellis}, (nonabelian) factor systems of triassociative algebras are 9-tuples of the form \[(\vp\vv,\vp\dd,\vp\pp,\vp\vv',\vp\dd',\vp\pp',f\vv,f\dd,f\pp)\] where \begin{align*}
    &\vp_*:B\xrightarrow{} \mathscr{L}(A) \text{ are linear,} & &\vp_*':B\xrightarrow{} \mathscr{L}(A) \text{ are linear,} & &f_*:B\times B\xrightarrow{} A \text{ are bilinear}
\end{align*} for $*\in \{\vdash,\dashv,\perp\}$. It is a matter of direct computation to verify that factor systems are in one-to-one correspondence with extensions of $A$ by $B$, and must satisfy 77 identities, 7 for each of the 11 triassociative relations. Returning to the central case, all $\vp$ maps become trivial, and these 77 axioms reduce to 11 relations on the $f_*$'s that we will call 2-cocycle conditions. In particular, a 2-cocycle of triassociative algebras $A$ by $B$ is a triple $(f\vv,f\dd,f\pp)$ of bilinear forms $B\times B\xrightarrow{} A$ such that \begin{align*}
    &f\vv(x\vdash y,z) = f\vv(x,y\vdash z) && f\dd(x\dashv y,z) = f\dd(x,y\dashv z) \\ &f\vv(x\dashv y,z) = f\vv(x,y\vdash z) && f\dd(x\dashv y,z) = f\dd(x,y\vdash z) \\ &f\dd(x\vdash y,z) = f\vv(x,y\dashv z) \\ &f\vv(x\perp y,z) = f\vv(x,y\vdash z) && f\dd(x\dashv y,z) = f\dd(x,y\perp z) \\ &f\pp(x\vdash y,z) = f\vv(x,y\perp z) && f\dd(x\perp y,z) = f\pp(x,y\dashv z) \\ &f\pp(x\dashv y,z) = f\pp(x,y\vdash z) \\ &f\pp(x\perp y,z) = f\pp(x,y\perp z)
\end{align*} for all $x,y,z\in B$.

Consider a central extension $0\xrightarrow{} A\xrightarrow{} L\xrightarrow{} B\xrightarrow{} 0$ of $A$ by $B$ with section $\mu:B\xrightarrow{} L$ and define three bilinear forms by \begin{align*}
    &f\vv(x,y) = \mu(x)\vdash\mu(y) - \mu(x\vdash y), \\ &f\dd(x,y) = \mu(x)\dashv\mu(y) - \mu(x\dashv y), \\ &f\pp(x,y) = \mu(x)\perp\mu(y) - \mu(x\perp y)
\end{align*} for $x,y\in B$. Then the images of $f\vv$, $f\dd$, and $f\pp$ fall in $A$ by exactness, and it is readily verified that the triple $(f\vv,f\dd,f\pp)$ is a 2-cocycle of triassociative algebras. Let $\cZ^2(B,A)$ denote the set of all 2-cocycles and $\cB^2(B,A)$ denote the set of all 2-coboundaries, i.e. 2-cocycles $(f\vv,f\dd,f\pp)$ such that \begin{align*}
    &f\vv(x,y) = -\E(x\vdash y),\\ &f\dd(x,y) = -\E(x\dashv y),\\ &f\pp(x,y) = -\E(x\perp y)
\end{align*} for some linear transformation $\E:B\xrightarrow{} A$. As in other algebraic settings, elements $(f\vv,f\dd,f\pp)$ and $(g\vv,g\dd,g\pp)$ in $\cZ^2(B,A)$ correspond to equivalent extensions if and only if they ``differ" by a coboundary, i.e. if there is a linear map $\E:B\xrightarrow{} A$ such that \begin{align*}
    &f\vv(x,y) - g\vv(x,y) = -\E(x\vdash y),\\ & f\dd(x,y) - g\dd(x,y) = -\E(x\dashv y),\\ & f\pp(x,y) - g\pp(x,y) = -\E(x\perp y)
\end{align*} for all $x,y\in B$. Therefore, central extensions of $A$ by $B$ are equivalent if and only if they give rise to the same element of $\cH^2(B,A) = \cZ^2(B,A)/\cB^2(B,A)$.

\subsection{Hochschild-Serre Spectral Sequence}
The remainder of this paper relies on the following Hochschild-Serre type spectral sequence of low dimension. Let $Z$ be a central ideal of a triassociative algebra $L$ and consider the natural central extension \[0\xrightarrow{}Z\xrightarrow{} L \xrightarrow{\beta}L/Z\xrightarrow{} 0\] with section $\mu$ of $\beta$. Let $A$ be a central $L$-module.

\begin{thm}
The sequence \[0\xrightarrow{} \Hom(L/Z,A)\xrightarrow{\Inf_1} \Hom(L,A)\xrightarrow{\Res} \Hom(Z,A)\xrightarrow{\Tra} \cH^2(L/Z,A)\xrightarrow{\Inf_2} \cH^2(L,A)\] is exact.
\end{thm}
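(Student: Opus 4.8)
The plan is to establish exactness at each of the four interior nodes of the five-term sequence by constructing the maps explicitly and verifying kernel-image agreement, following the standard template for Hochschild-Serre type sequences in low degree. Since the paper notes that many results carry over by the same logic as the Lie, Leibniz, and diassociative cases, I would define the inflation, restriction, and transgression maps in the natural way and lean on the triassociative 2-cocycle description already set up in this section.

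First I would define the maps. The first inflation $\Inf_1$ sends a homomorphism $L/Z \to A$ to its composite with the projection $\beta:L \to L/Z$; this is clearly injective since $\beta$ is surjective, giving exactness at $\Hom(L/Z,A)$ immediately. The restriction $\Res$ is restriction of a homomorphism $L \to A$ to the central ideal $Z$. The transgression $\Tra$ is the key construction: given $\theta \in \Hom(Z,A)$, I would use the fixed section $\mu$ of $\beta$ to define a 2-cocycle on $L/Z$ by setting $f_*(x,y) = \theta\bigl(\mu(x) * \mu(y) - \mu(x * y)\bigr)$ for each $* \in \{\vdash,\dashv,\perp\}$, noting that $\mu(x)*\mu(y)-\mu(x*y)$ lands in $Z = \ker\beta$ so $\theta$ applies. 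The second inflation $\Inf_2$ is the map on $\cH^2$ induced by pulling back a cocycle along $\beta$.

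Then I would verify exactness at the three remaining nodes. At $\Hom(L,A)$: a homomorphism $L \to A$ factors through $L/Z$ exactly when it vanishes on $Z$, which is precisely the condition $\Res = 0$, matching $\ima \Inf_1 = \ker \Res$. At $\Hom(Z,A)$: I would show $\theta$ lies in $\ker\Tra$ (i.e. the transgressed cocycle is a coboundary) if and only if $\theta$ extends to a homomorphism on all of $L$, which is the image of $\Res$; the coboundary data $\E:L/Z \to A$ produced from $\Tra(\theta)=0$ assembles with $\theta$ into the desired extension. At $\cH^2(L/Z,A)$: I would show a class inflates to zero in $\cH^2(L,A)$ precisely when it is transgressed from some $\theta \in \Hom(Z,A)$, which is the heart of the spectral-sequence edge relation.

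The main obstacle I expect is the well-definedness and cocycle property of $\Tra$, together with the exactness at $\cH^2(L/Z,A)$. For the former, I must check that the triple $\bigl(f\vv,f\dd,f\pp\bigr)$ built from $\theta$ and $\mu$ actually satisfies all eleven 2-cocycle conditions listed above; this should follow because $\theta$ is a homomorphism and the bracketed expressions measure the failure of $\mu$ to be multiplicative, so the triassociative relations in $L$ transfer to the cocycle identities, but it requires care across all three operations and their interactions. I would also need independence of the cohomology class from the choice of section $\mu$. For exactness at $\cH^2(L/Z,A)$, the delicate direction is showing that a cocycle on $L/Z$ whose inflation is a coboundary on $L$ must come from a transgression; here I would unwind the coboundary map $\E:L \to A$, restrict its relevant component to $Z$ to recover the homomorphism $\theta$, and check that the leftover data is exactly the transgressed cocycle up to coboundary. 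Because the triassociative setting merely multiplies the number of bookkeeping identities relative to the diassociative case in \cite{mainellis batten di}, I anticipate no new conceptual difficulty beyond this combinatorial verification.
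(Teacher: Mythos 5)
Your proposal follows essentially the same route as the paper's proof: the same four maps (inflation by composition with $\beta$, restriction, transgression via a fixed section $\mu$, inflation of cocycles pulled back along $\beta$), and the same two key constructions — assembling $\theta$ with the coboundary witness $\E:L/Z\to A$ into a homomorphism $\sigma:L\to A$ extending $\theta$ for exactness at $\Hom(Z,A)$, and, for exactness at $\cH^2(L/Z,A)$, restricting the coboundary map $\E:L\to A$ to $Z$ and absorbing the discrepancy into the coboundary given by $\E\circ\mu$. The computational points you flag (the eleven cocycle identities for the transgressed triple, independence of the class from the choice of section) are exactly what the paper verifies, so your outline contains no gaps in approach.
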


We first define the maps in the sequence and verify that they make sense. For any homomorphism $\chi:L/Z\xrightarrow{} A$, define $\Inf_1:\Hom(L/Z,A)\xrightarrow{} \Hom(L,A)$ by $\Inf_1(\chi) = \chi\circ \beta$. Next, for $\pi\in \Hom(L,A)$, define $\Res:\Hom(L,A)\xrightarrow{} \Hom(Z,A)$ by $\Res(\pi) = \pi\circ \iota$ where $\iota:Z\xrightarrow{} L$ is the inclusion map. It is readily verified that $\Inf_1$ and $\Res$ are well-defined and linear. To define the transgression map, let
\begin{align*}
    & f\vv:L/Z\times L/Z\xrightarrow{} Z, \\ & f\dd:L/Z\times L/Z\xrightarrow{} Z, \\ & f\pp:L/Z\times L/Z\xrightarrow{} Z
\end{align*} be the bilinear forms defined by
\begin{align*}
    & f\vv(\overline{x},\overline{y}) = \mu(\overline{x})\vdash \mu(\overline{y}) - \mu(\overline{x}\vdash \overline{y}), \\ & f\dd(\overline{x},\overline{y}) = \mu(\overline{x})\dashv \mu(\overline{y}) - \mu(\overline{x}\dashv \overline{y}), \\ & f\pp(\overline{x},\overline{y}) = \mu(\overline{x})\perp \mu(\overline{y}) - \mu(\overline{x}\perp \overline{y})
\end{align*} for $x,y\in L$. For any $\chi\in \Hom(Z,A)$, we have $(\chi\circ f\vv, \chi\circ f\dd, \chi\circ f\pp)\in \cZ^2(L/Z,A)$ since $\chi$ is a homomorphism. Given another section $\nu$ of $\beta$, define a tuple $(g\vv,g\dd,g\pp)$ of bilinear forms by
\begin{align*}
    & g\vv(\overline{x}, \overline{y}) = \nu(\overline{x})\vdash \nu(\overline{y}) - \nu(\overline{x}\vdash \overline{y}) \\ & g\dd(\overline{x}, \overline{y}) = \nu(\overline{x})\dashv \nu(\overline{y}) - \nu(\overline{x}\dashv \overline{y}) \\ & g\pp(\overline{x}, \overline{y}) = \nu(\overline{x})\perp \nu(\overline{y}) - \nu(\overline{x}\perp \overline{y})
\end{align*} for $x,y\in L$. Then $(f\vv,f\dd,f\pp)$ and $(g\vv,g\dd,g\pp)$ are cohomologous in $\cH^2(L/Z,Z)$, which implies that there exists a linear transformation $\E:L/Z\xrightarrow{} Z$ such that
\begin{align*}
    & f\vv(\overline{x}, \overline{y}) - g\vv(\overline{x}, \overline{y}) = -\E(\overline{x}\vdash \overline{y}), \\ & f\dd(\overline{x}, \overline{y}) - g\dd(\overline{x}, \overline{y}) = -\E(\overline{x}\dashv \overline{y}), \\ & f\pp(\overline{x}, \overline{y}) - g\pp(\overline{x}, \overline{y}) = -\E(\overline{x}\perp \overline{y}).
\end{align*} Therefore, $\chi\circ \E:L/Z\xrightarrow{} A$ is a linear map by which $(\chi\circ f\vv, \chi\circ f\dd, \chi\circ f\pp)$ and $(\chi\circ g\vv, \chi\circ g\dd, \chi\circ g\pp)$ are cohomologous in $\cH^2(L/Z,A)$, and so we define \[\Tra(\chi) = \overline{(\chi\circ f\vv, \chi\circ f\dd, \chi\circ f\pp)}.\] It is straightforward to verify that $\Tra$ is linear.

Finally, we define the second inflation map $\Inf_2:\cH^2(L/Z,A)\xrightarrow{} \cH^2(L,A)$ by \[\Inf_2((f\vv,f\dd,f\pp) + \cB^2(L/Z,A)) = (f\vv',f\dd',f\pp') + \cB^2(L,A)\] where $f\vv'(x,y) = f\vv(\beta(x),\beta(y))$, $f\dd'(x,y) = f\dd(\beta(x),\beta(y))$, and $f\pp'(x,y) = f\pp(\beta(x),\beta(y))$ for $(f\vv,f\dd,f\pp)\in \cZ^2(L/Z,A)$ and $x,y\in L$. It is straightforward to verify that $\Inf_2$ is linear. To check that $\Inf_2$ maps cocycles to cocycles, one begins by computing
\begin{align*}
    f\vv'(x\vdash y, z) & = f\vv(\beta(x\vdash y),\beta(z)) \\ &= f\vv(\beta(x)\vdash \beta(y), \beta(z)) \\ &= f\vv(\beta(x), \beta(y)\vdash \beta(z)) \\ &= f\vv(\beta(x), \beta(y\vdash z)) \\ &= f\vv'(x,y\vdash z)
\end{align*} for all $x,y,z\in L$, which holds since $(f\vv,f\dd,f\pp)$ is a 2-cocycle. The other ten axioms of 2-cocycles hold by similar computations, and so $(f\vv',f\dd',f\pp')\in \cZ^2(L,A)$. To check that $\Inf_2$ maps coboundaries to coboundaries, consider an element $(f\vv,f\dd,f\pp)\in \cB^2(L/Z,A)$. Then there is a linear transformation $\E:L/Z\xrightarrow{} A$ such that $f\vv(\overline{x},\overline{y}) = -\E(\overline{x}\vdash \overline{y})$, $f\dd(\overline{x},\overline{y}) = -\E(\overline{x}\dashv \overline{y})$, and $f\pp(\overline{x},\overline{y}) = -\E(\overline{x}\perp \overline{y})$ for all $x,y\in L$. Here, $\beta(x) = x+Z = \overline{x}$ for any $x\in L$. One has \begin{align*}
    f\vv'(x,y) &= f\vv(\beta(x),\beta(y)) \\ &= -\E(\beta(x)\vdash \beta(y)) \\ &= -\E\circ\beta(x\vdash y)
\end{align*} and, similarly, $f\dd'(x,y) = -\E\circ\beta(x\dashv y)$ and $f\pp'(x,y) = -\E\circ\beta(x\perp y)$. Therefore, $(f\vv',f\dd',f\pp')$ is an element of $\cB^2(L,A)$.

\begin{proof}
Given our section $\mu$ of $0\xrightarrow{} Z\xrightarrow{} L\xrightarrow{\beta}L/Z\xrightarrow{} 0$, let $(f\vv,f\dd,f\pp)\in\cZ^2(L/Z,Z)$ be the cocycle defined by \begin{align*}
    &f\vv(\overline{x},\overline{y}) = \mu(\overline{x})\vdash \mu(\overline{y}) - \mu(\overline{x}\vdash \overline{y}), \\ & f\dd(\overline{x},\overline{y}) = \mu(\overline{x})\dashv \mu(\overline{y}) - \mu(\overline{x}\dashv \overline{y}) \\ & f\pp(\overline{x},\overline{y}) = \mu(\overline{x})\perp \mu(\overline{y}) - \mu(\overline{x}\perp \overline{y})
\end{align*} for $x,y\in L$. We first note that $\Inf_1$ is injective by the same logic as in the previous cases \cite{mainellis batten, mainellis batten di} and thus the sequence is exact at $\Hom(L/Z,A)$. Exactness at $\Hom(L,A)$ also follows similarly.

For exactness at $\Hom(Z,A)$, first consider a homomorphism $\chi\in \Hom(L,A)$. Then \begin{align*}
    \chi\circ f\vv(\overline{x},\overline{y}) &= \chi\circ\mu(\overline{x})\vdash \chi\circ\mu(\overline{y}) - \chi\circ\mu(\overline{x}\vdash\overline{y}) \\ &= -\chi\circ\mu(\overline{x}\vdash\overline{y})
\end{align*} and, similarly, $\chi\circ f\dd(\overline{x},\overline{y}) = -\chi\circ\mu(\overline{x}\dashv\overline{y})$ and $\chi\circ f\pp(\overline{x},\overline{y}) = -\chi\circ\mu(\overline{x}\perp\overline{y})$. We therefore have a coboundary $(\chi\circ f\vv,\chi\circ f\dd,\chi\circ f\pp)\in \cB^2(L/Z,A)$ and \[\Tra(\Res(\chi)) = \Tra(\chi\circ\iota) = \overline{(\chi\circ\iota\circ f\vv,\chi\circ\iota\circ f\dd,\chi\circ\iota\circ f\pp)} = 0.\] Thus $\ima(\Res)\subseteq \ker(\Tra)$. Conversely, suppose there exists a homomorphism $\theta:Z\xrightarrow{} A$ such that \[\Tra(\theta) = \overline{(\theta\circ f\vv, \theta\circ f\dd, \theta\circ f\pp)} = 0,\] i.e. such that $(\theta\circ f\vv,\theta\circ f\dd,\theta\circ f\pp)\in \cB^2(L/Z,A)$. Then there exists a linear map $\E:L/Z\xrightarrow{}A$ such that \begin{align*}
    & \theta\circ f\vv(\overline{x},\overline{y}) = -\E(\overline{x}\vdash \overline{y}), \\ & \theta\circ f\dd(\overline{x},\overline{y}) = -\E(\overline{x}\dashv \overline{y}), \\ & \theta\circ f\pp(\overline{x},\overline{y}) = -\E(\overline{x}\perp \overline{y}).
\end{align*} For any $x,y\in L$, we know that $x=\mu(\overline{x}) + z_x$ and $y=\mu(\overline{y}) + z_y$ for some $z_x,z_y\in Z$. Thus \begin{align*}
    & x\vdash y = \mu(\overline{x}\vdash \overline{y}) + z_{x\vdash y} = \mu(\overline{x})\vdash \mu(\overline{y}), \\ & x\dashv y = \mu(\overline{x}\dashv \overline{y}) + z_{x\dashv y} = \mu(\overline{x})\dashv \mu(\overline{y}), \\ & x\perp y = \mu(\overline{x}\perp \overline{y}) + z_{x\perp y} = \mu(\overline{x})\perp \mu(\overline{y})
\end{align*} which implies that \begin{align}\label{dias chi on H}
    \begin{split}& \theta(z_{x\vdash y}) = \theta(\mu(\overline{x})\vdash\mu(\overline{y}) - \mu(\overline{x}\vdash \overline{y})) = \theta\circ f\vv(\overline{x},\overline{y}) = -\E(\overline{x}\vdash\overline{y}), \\ & \theta(z_{x\dashv y}) = \theta(\mu(\overline{x})\dashv\mu(\overline{y}) - \mu(\overline{x}\dashv \overline{y})) = \theta\circ f\dd(\overline{x},\overline{y}) = -\E(\overline{x}\dashv \overline{y}), \\ & \theta(z_{x\perp y}) = \theta(\mu(\overline{x})\perp\mu(\overline{y}) - \mu(\overline{x}\perp \overline{y})) = \theta\circ f\pp(\overline{x},\overline{y}) = -\E(\overline{x}\perp \overline{y}).\end{split}
    \end{align}
Define a linear map $\sigma:L\xrightarrow{} A$ by $\sigma(x) = \theta(z_x) + \E(\overline{x})$. Then $\sigma(x)\vdash \sigma(y)=0$, $\sigma(x)\dashv\sigma(y) = 0$, and $\sigma(x)\perp\sigma(y) = 0$ since $\ima\sigma\subseteq A$. By (\ref{dias chi on H}), \begin{align*}
    & \sigma(x\vdash y) = \theta(z_{x\vdash y}) + \E(x\vdash y) = 0, \\ & \sigma(x\dashv y) = \theta(z_{x\dashv y}) + \E(x\dashv y) = 0, \\ & \sigma(x\perp y) = \theta(z_{x\perp y}) + \E(x\perp y) = 0.
\end{align*} Thus $\sigma$ is a homomorphism. Moreover, $\sigma(z) = \theta(z) + \E(\overline{z}) = \theta(z)$ for all $z\in Z$, which implies that $\Res(\sigma) = \theta$. Hence $\ker(\Tra)\subseteq \ima(\Res)$ and $\ker(\Tra) = \ima(\Res)$.

For exactness at $\cH^2(L/Z,A)$, first consider a map $\chi\in \Hom(Z,A)$. Then \[\Tra(\chi) = (\chi\circ f\vv,\chi\circ f\dd,\chi\circ f\pp) + \cB^2(L/Z,A)\] where $(\chi\circ f\vv,\chi\circ f\dd,\chi\circ f\pp)\in \cZ^2(L/Z,A)$. One computes \[\Inf_2((\chi\circ f\vv,\chi\circ f\dd,\chi\circ f\pp) + \cB^2(L/Z,A)) = ((\chi\circ f\vv)',(\chi\circ f\dd)',(\chi\circ f\pp)') + \cB^2(L,A)\] where \begin{align*}
    & (\chi\circ f\vv)'(x,y) = \chi\circ f\vv(\overline{x},\overline{y}), \\ & (\chi\circ f\dd)'(x,y) = \chi\circ f\dd(\overline{x},\overline{y}), \\ & (\chi\circ f\pp)'(x,y) = \chi\circ f\pp(\overline{x},\overline{y})
\end{align*} for $x,y\in L$. To show that $\ima(\Tra)\subseteq \ker(\Inf_2)$, we need to find a linear transformation $\E:L\xrightarrow{} A$ such that \begin{align*}
    & (\chi\circ f\vv)'(x,y) = -\E(x\vdash y), \\ & (\chi\circ f\dd)'(x,y) = -\E(x\dashv y), \\ & (\chi\circ f\pp)'(x,y) = -\E(x\perp y).
\end{align*} Let $x=\mu(\overline{x}) + z_x$ and $y=\mu(\overline{y}) + z_y$. Again, the equalities \begin{align*}
    & x\vdash y = \mu(\overline{x}\vdash \overline{y}) + z_{x\vdash y} = \mu(\overline{x})\vdash \mu(\overline{y}), \\ & x\dashv y = \mu(\overline{x}\dashv \overline{y}) + z_{x\dashv y} = \mu(\overline{x})\dashv \mu(\overline{y}), \\ & x\perp y = \mu(\overline{x}\perp \overline{y}) + z_{x\perp y} = \mu(\overline{x})\perp \mu(\overline{y})
\end{align*} yield \begin{align*}
    & \chi\circ f\vv(\overline{x},\overline{y}) = \chi(\mu(\overline{x}\vdash\overline{y} - \mu(\overline{x}\vdash\overline{y})) = \chi(z_{x\vdash y}), \\ & \chi\circ f\dd(\overline{x},\overline{y}) = \chi(\mu(\overline{x}\dashv\overline{y} - \mu(\overline{x}\dashv\overline{y})) = \chi(z_{x\dashv y}), \\ & \chi\circ f\pp(\overline{x},\overline{y}) = \chi(\mu(\overline{x}\perp\overline{y} - \mu(\overline{x}\perp\overline{y})) = \chi(z_{x\perp y}).
\end{align*} Define $\E(x) = -\chi(z_x)$. Then $\E$ is linear and \begin{align*}
    & \E(x\vdash y) = -\chi(z_{x\vdash y}) = -\chi\circ f\vv(\overline{x},\overline{y}) = -(\chi\circ f\vv)'(x,y), \\ & \E(x\dashv y) = -\chi(z_{x\dashv y}) = -\chi\circ f\dd(\overline{x},\overline{y}) = -(\chi\circ f\dd)'(x,y), \\ & \E(x\perp y) = -\chi(z_{x\perp y}) = -\chi\circ f\pp(\overline{x},\overline{y}) = -(\chi\circ f\pp)'(x,y).
\end{align*} This implies that $((\chi\circ f\vv)',(\chi\circ f\dd)',(\chi\circ f\pp)') \in \cB^2(L,A)$ and hence $\ima(\Tra)\subseteq \ker(\Inf_2)$.

Conversely, suppose $(g\vv,g\dd,g\pp)\in \cZ^2(L/Z,A)$ is a cocycle such that \[\overline{(g\vv,g\dd,g\pp)}\in \ker(\Inf_2).\] Then there exists a linear transformation $\E:L\xrightarrow{} A$ such that \begin{align*}
    & g\vv(\overline{x},\overline{y}) = g\vv'(x,y) = -\E(x\vdash y), \\ & g\dd(\overline{x},\overline{y}) = g\dd'(x,y) = -\E(x\dashv y), \\ & g\pp(\overline{x},\overline{y}) = g\pp'(x,y) = -\E(x\perp y)
\end{align*} for all $x,y\in L$. Since $\E$ is linear, $(\E\circ f\vv,\E\circ f\dd,\E\circ f\pp)\in \cZ^2(L/Z,A)$. As before, $x=\mu(\overline{x}) + z_x$ and $y=\mu(\overline{y}) + z_y$ for some $z_x,z_y\in Z$. Therefore \begin{align*}
    & x\vdash y = \mu(\overline{x})\vdash\mu(\overline{y}), \\ & x\dashv y = \mu(\overline{x})\dashv \mu(\overline{y}), \\ & x\perp y = \mu(\overline{x})\perp \mu(\overline{y}).
\end{align*} Now \begin{align*}
    g\vv'(x,y) &= g\vv(\overline{x},\overline{y}) \\ &= -\E(x\vdash y)\\ &= -\E(\overline{x}\vdash\overline{y}) \\ &= - \E\circ f\vv(\overline{x},\overline{y}) -\E\circ \mu(\overline{x}\vdash\overline{y})
\end{align*} where $\E\circ \mu:L/Z\xrightarrow{} A$. By similar computations, we have $g\dd'(x,y) = -\E\circ f\dd(\overline{x},\overline{y}) - \E\circ \mu(\overline{x}\dashv \overline{y})$ and $g\pp'(x,y) = -\E\circ f\pp(\overline{x},\overline{y}) - \E\circ \mu(\overline{x}\perp \overline{y})$. Therefore \[\overline{(g\vv,g\dd,g\pp)} = \overline{(-\E\circ f\vv,-\E\circ f\dd,-\E\circ f\pp)} = -\Tra(\E)\] which implies that $\ker(\Inf_2)\subseteq \ima(\Tra)$.
\end{proof}

\subsection{Relation of Multipliers and Cohomology}
Let $L$ be a triassociative algebra and let $\F$ be considered as a central $L$-module. The following theorem holds similarly to its diassociative and Leibniz analogues.

\begin{thm}\label{dias if tra surj}
Let $Z$ be a central ideal in $L$. Then $L'\cap Z$ is isomorphic to the image of $\Hom(Z,\F)$ under the transgression map. In particular, if $\Tra$ is surjective, then $L'\cap Z\cong \cH^2(L/Z,\F)$.
\end{thm}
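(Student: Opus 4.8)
The plan is to reduce the two cohomological maps to linear algebra and then read off the isomorphism from the exact sequence of the previous theorem. Since $\F$ is regarded as a triassociative algebra with trivial products, a homomorphism $L\xrightarrow{} \F$ is exactly a linear functional annihilating every product, so $\Hom(L,\F) = \{\chi\in L^*: \chi(L')=0\}$; and because $Z$ is central we have $Z'=0$, whence every linear functional on $Z$ is a homomorphism and $\Hom(Z,\F) = Z^*$. I would record these two identifications at the outset, as they turn $\Res$ and $\Tra$ into maps of ordinary dual spaces. Throughout I work in the finite-dimensional setting of the paper.

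Next I would invoke exactness of the Hochschild--Serre sequence at $\Hom(Z,\F)$, which gives $\ker(\Tra) = \ima(\Res)$ and hence $\ima(\Tra)\cong \Hom(Z,\F)/\ker(\Tra) = Z^*/\ima(\Res)$. The crux is to pin down $\ima(\Res)$ inside $Z^*$. Since $\Res(\chi) = \chi|_Z$, one inclusion is immediate: if $\chi$ kills $L'$ then $\chi|_Z$ kills $L'\cap Z$, so $\ima(\Res)\subseteq (L'\cap Z)^{\circ}$, the annihilator of $L'\cap Z$ in $Z^*$. For the reverse inclusion I would take any $\psi\in Z^*$ with $\psi(L'\cap Z)=0$ and extend it: define a functional on the subspace $L'+Z$ by $0$ on $L'$ and by $\psi$ on $Z$. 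This is well defined precisely because the two prescriptions agree on the overlap $L'\cap Z$ (both give $0$), and it then extends arbitrarily to a functional $\chi$ on all of $L$ that vanishes on $L'$. Since $\Res(\chi)=\psi$, we conclude $\ima(\Res) = (L'\cap Z)^{\circ}$.

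I would then finish by standard finite-dimensional duality. The restriction map $Z^*\xrightarrow{} (L'\cap Z)^*$ is surjective with kernel $(L'\cap Z)^{\circ} = \ima(\Res) = \ker(\Tra)$, so it induces an isomorphism $Z^*/\ker(\Tra)\cong (L'\cap Z)^*$. Combining this with $\ima(\Tra)\cong Z^*/\ker(\Tra)$ and with $(L'\cap Z)^*\cong L'\cap Z$ yields the asserted $L'\cap Z\cong \ima(\Tra)$. The ``in particular'' clause is then immediate: if $\Tra$ is surjective, its image is all of $\cH^2(L/Z,\F)$, so $L'\cap Z\cong \cH^2(L/Z,\F)$.

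I expect the main obstacle to be the well-definedness and extension step in identifying $\ima(\Res)$ with the annihilator $(L'\cap Z)^{\circ}$, namely verifying that a functional on $Z$ annihilating $L'\cap Z$ genuinely extends to a functional on $L$ annihilating the whole of $L'$; this is exactly where the compatibility of the two defining prescriptions on the overlap $L'\cap Z$ is used. Everything else—the two dual-space identifications and the concluding duality isomorphisms—is routine linear algebra valid in the finite-dimensional setting.
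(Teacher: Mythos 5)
Your proof is correct and takes essentially the same route as the paper's (which is deferred to the Leibniz and diassociative analogues): exactness of the Hochschild--Serre sequence at $\Hom(Z,\F)$, the identification $\ima(\Res) = (L'\cap Z)^{\circ}$ by extending functionals from $L'+Z$ to all of $L$, and the duality $Z^*/(L'\cap Z)^{\circ}\cong (L'\cap Z)^*\cong L'\cap Z$. One small but useful refinement: the only finiteness your argument actually needs is $\dim(L'\cap Z)<\infty$ (for the final duality step), not finite-dimensionality of $L$ itself, which matters because the paper later applies this theorem with $L$ replaced by the infinite-dimensional algebra $F/(\FR)$ arising from a free presentation.
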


Now consider a free presentation $0\xrightarrow{} R\xrightarrow{} F\xrightarrow{\U} L\xrightarrow{} 0$ of $L$. The sequence \[0\xrightarrow{} \frac{R}{\FR}\xrightarrow{} \frac{F}{\FR}\xrightarrow{} L\xrightarrow{} 0\] is a central extension since all of $R*F$ and $F*R$ are contained in $\FR$ for $*\in \{\vdash,\dashv,\perp\}$.

\begin{lem}\label{dias restriction to R}
Let $0\xrightarrow{} A\xrightarrow{} B\xrightarrow{\phi} C\xrightarrow{} 0$ be a central extension and $\alpha:L\xrightarrow{} C$ be a homomorphism. Then there exists a homomorphism $\beta:F/(\FR)\xrightarrow{} B$ such that \[\begin{tikzcd}
0\arrow[r]& \frac{R}{\FR}\arrow[r] \arrow[d, "\gamma"] & \frac{F}{\FR} \arrow[r] \arrow[d,"\beta"] & \LL\arrow[r] \arrow[d, "\alpha"] &0 \\
0\arrow[r] &A \arrow[r] & B\arrow[r] &C\arrow[r] &0
\end{tikzcd}\] is commutative, where $\gamma$ is the restriction of $\beta$ to $R/(\FR)$.
\end{lem}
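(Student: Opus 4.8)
The plan is to use the freeness of $F$ to lift the composite $\alpha\circ\U$ through the surjection $\phi$, and then to invoke centrality of the bottom extension to show that this lift descends to the quotient $F/(\FR)$.

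First I would build a homomorphism $\tilde\beta:F\xrightarrow{} B$ at the level of $F$ itself, before passing to the quotient. Since $F$ is free on some generating set $\{x_i\}$ and $\phi$ is surjective, for each $i$ I can choose $b_i\in B$ with $\phi(b_i) = \alpha(\U(x_i))$. The universal property of the free triassociative algebra then yields a unique homomorphism $\tilde\beta:F\xrightarrow{} B$ with $\tilde\beta(x_i) = b_i$. Because $\phi\circ\tilde\beta$ and $\alpha\circ\U$ are homomorphisms agreeing on the generators of $F$, they agree on all of $F$, so $\phi\circ\tilde\beta = \alpha\circ\U$.

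Next I would show that $\tilde\beta$ vanishes on $\FR$, so that it factors through $F/(\FR)$; this is where the centrality hypothesis is essential. For any $r\in R$ we have $\phi(\tilde\beta(r)) = \alpha(\U(r)) = 0$ since $r\in\ker\U$, whence $\tilde\beta(r)\in\ker\phi = A\subseteq Z(B)$. Consequently, for any $f\in F$ and any $*\in\{\vdash,\dashv,\perp\}$, both $\tilde\beta(f* r) = \tilde\beta(f)*\tilde\beta(r)$ and $\tilde\beta(r* f) = \tilde\beta(r)*\tilde\beta(f)$ vanish, because $\tilde\beta(r)$ is central in $B$. Thus $\tilde\beta(\FR) = 0$ and $\tilde\beta$ induces a homomorphism $\beta:F/(\FR)\xrightarrow{} B$. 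I expect this factorization step to be the only genuinely delicate point; without centrality the products $\tilde\beta(f)*\tilde\beta(r)$ need not die, and the descent fails.

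Finally I would verify commutativity and define $\gamma$, which is then purely formal. The identity $\phi\circ\tilde\beta = \alpha\circ\U$ descends to $\phi\circ\beta = \alpha\circ\overline{\U}$, giving commutativity of the right-hand square (here $\overline{\U}:F/(\FR)\xrightarrow{} L$ is the map induced by $\U$). For the left-hand square, the same computation $\phi(\beta(\overline{r})) = \alpha(\U(r)) = 0$ shows that $\beta$ carries the image of $R$ in $F/(\FR)$ into $A=\ker\phi$; I then take $\gamma$ to be the restriction $\beta|_{R/(\FR)}$, viewed as a map into $A$. With this definition the left square commutes by construction, and the entire diagram is commutative as required.
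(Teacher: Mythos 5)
Your proposal is correct and follows essentially the same route as the paper: lift $\alpha\circ\U$ through $\phi$ via the freeness of $F$, observe that the lift sends $R$ into $\ker\phi = A\subseteq Z(B)$, conclude that it annihilates $\FR$ by centrality, and let it descend to the quotient. The paper simply invokes the lifting property of the free algebra in one line where you spell out the choice of generators, and it leaves the final commutativity checks implicit where you make them explicit; the mathematical content is identical.
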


\begin{proof}
Since $F$ is free, there exists a homomorphism $\sigma:F\xrightarrow{} B$ such that \[\begin{tikzcd}
 F\arrow[r, "\U"] \arrow[d, "\sigma", swap] & \LL \arrow[d,"\alpha"] \\ B \arrow[r, "\phi"] & C \end{tikzcd}\] is commutative. Let $r\in R\subseteq F$. Then $\U(r) = 0$ since $\ker \U = R$. Therefore $0=\alpha\circ \U(r) = \phi\circ\sigma(r)$ and so $\sigma(R)\subseteq \ker \phi$. We want to show that $\FR\subseteq \ker \sigma$. If $x\in F$ and $r\in R$, then \begin{align*}
     &\sigma(r* x) = \sigma(r)*\sigma(x) = 0, && \sigma(x*r) = \sigma(x)*\sigma(r) = 0
 \end{align*} for $*\in\{\vdash,\dashv,\perp\}$ since $\sigma(r)\in \ker \phi = A \subseteq Z(B)$. Thus $\sigma$ induces the desired homomorphism $\beta$.
\end{proof}

\begin{lem}\label{dias tra surj}
Let $0\xrightarrow{} R\xrightarrow{} F\xrightarrow{} L\xrightarrow{} 0$ be a free presentation of $L$ and let $A$ be a central $L$-module. Then the transgression map $\Tra:\Hom(R/(\FR),A)\xrightarrow{} \cH^2(L,A)$ associated with \[0\xrightarrow{} \frac{R}{\FR}\xrightarrow{} \frac{F}{\FR}\xrightarrow{\phi} L\xrightarrow{} 0\] is surjective.
\end{lem}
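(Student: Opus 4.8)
The plan is to show that every class in $\cH^2(L,A)$ lies in the image of $\Tra$ by realizing it as a central extension and then lifting that extension across the free presentation using Lemma \ref{dias restriction to R}. Throughout, fix the section $\mu$ of $\phi$ and the associated bilinear forms $f\vv,f\dd,f\pp:L\times L\xrightarrow{} R/(\FR)$ defined by $f_*(x,y) = \mu(x)*\mu(y) - \mu(x*y)$, so that $\Tra(\chi) = \overline{(\chi\circ f\vv,\chi\circ f\dd,\chi\circ f\pp)}$ for any $\chi\in\Hom(R/(\FR),A)$. Note that applying $\phi$ shows $f_*(x,y)\in\ker\phi = R/(\FR)$, as needed.

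Given an arbitrary class $\overline{(g\vv,g\dd,g\pp)}\in\cH^2(L,A)$, I would first build the central extension it determines: take $B = A\oplus L$ as a vector space with products $(a_1,l_1)*(a_2,l_2) = (g_*(l_1,l_2),\, l_1*l_2)$ for $*\in\{\vdash,\dashv,\perp\}$. The eleven $2$-cocycle conditions guarantee that $B$ is a triassociative algebra, and projection $\phi':B\xrightarrow{} L$ onto the second coordinate gives a central extension $0\xrightarrow{} A\xrightarrow{} B\xrightarrow{\phi'} L\xrightarrow{} 0$ whose cocycle, relative to the section $l\mapsto(0,l)$, is exactly $(g\vv,g\dd,g\pp)$. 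Applying Lemma \ref{dias restriction to R} to this extension with $\alpha = \mathrm{id}_L$ produces a homomorphism $\beta:F/(\FR)\xrightarrow{} B$ together with its restriction $\gamma = \beta|_{R/(\FR)}:R/(\FR)\xrightarrow{} A$; this $\gamma$ is the candidate preimage.

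It then remains to verify that $\Tra(\gamma) = \overline{(g\vv,g\dd,g\pp)}$. The commutativity $\phi'\circ\beta = \phi$ shows that $\nu := \beta\circ\mu$ is a section of $\phi'$, since $\phi'\circ\nu = \phi\circ\mu = \mathrm{id}_L$. Because $\beta$ is a homomorphism and $f_*(x,y)\in R/(\FR)$, the cocycle of $B$ computed from the section $\nu$ satisfies $\nu(x)*\nu(y) - \nu(x*y) = \beta\big(\mu(x)*\mu(y) - \mu(x*y)\big) = \gamma(f_*(x,y))$; that is, it equals $(\gamma\circ f\vv,\gamma\circ f\dd,\gamma\circ f\pp)$, which is precisely a representative of $\Tra(\gamma)$. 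Since the cocycles arising from two sections of the same extension $B$ are cohomologous, this class coincides with $\overline{(g\vv,g\dd,g\pp)}$, and surjectivity follows.

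The computations are all routine once the diagram is in place; the genuine content is recognizing that Lemma \ref{dias restriction to R} furnishes the lift $\beta$, and that composing the transgression section $\mu$ with $\beta$ reproduces on the nose the representative $\gamma\circ f_*$ of $\Tra(\gamma)$. The main obstacle I anticipate is bookkeeping: keeping the three parallel products straight and confirming that $B$ satisfies all eleven triassociative relations via the eleven $2$-cocycle conditions, together with invoking independence of the cohomology class from the chosen section (already needed for $\Tra$ to be well-defined) in the final step.
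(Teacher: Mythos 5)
Your proposal is correct and follows essentially the same argument as the paper: realize the class by a central extension, lift across the free presentation via Lemma \ref{dias restriction to R} with $\alpha = \mathrm{id}_L$, and check that the restricted map $\gamma$ transgresses to the given class by comparing cocycles of two sections of the same extension. The only cosmetic difference is that you explicitly build the extension $B = A\oplus L$ from the cocycle, whereas the paper simply invokes the already-established correspondence between classes in $\cH^2(L,A)$ and central extensions.
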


\begin{proof}
Let $\overline{(g\vv,g\dd,g\pp)}\in \cH^2(L,A)$ with associated central extension $0\xrightarrow{} A\xrightarrow{} E\xrightarrow{\vp} L\xrightarrow{} 0$. By the previous lemma, there exists a homomorphism $\theta$ such that \[\begin{tikzcd}
0\arrow[r]& \frac{R}{\FR}\arrow[r] \arrow[d, "\gamma"] & \frac{F}{\FR} \arrow[r, "\phi"] \arrow[d,"\theta"] & \LL\arrow[r] \arrow[d, "\text{id}"] &0 \\
0\arrow[r] &A \arrow[r] & E\arrow[r, "\vp"] &\LL\arrow[r] &0
\end{tikzcd}\] is commutative and $\gamma = \theta|_{R/(\FR)}$. Let $\mu$ be a section of $\phi$. Then $\vp\circ\theta\circ \mu = \phi\circ \mu = \text{id}$ and so $\theta\circ \mu$ is a section of $\vp$. Let $\lambda = \theta\circ \mu$ and define \begin{align*} &\beta\vv(x,y) = \lambda(x)\vdash\lambda(y) - \lambda(x\vdash y), \\ & \beta\dd(x,y) = \lambda(x)\dashv\lambda(y) - \lambda(x\dashv y), \\ & \beta\pp(x,y) = \lambda(x)\perp\lambda(y) - \lambda(x\perp y).
\end{align*} Then $(\beta\vv,\beta\dd,\beta\pp)\in \cZ^2(\LL,A)$ and $(\beta\vv,\beta\dd,\beta\pp)$ is cohomologous with $(g\vv,g\dd,g\pp)$ since they are associated with the same extension. One computes \begin{align*}
    \beta\vv(x,y) &= \theta(\mu(x))\vdash\theta(\mu(y)) - \theta(\mu(x\vdash y)) \\ &= \theta(\mu(x)\vdash\mu(y) - \mu(x\vdash y))\\ &= \gamma(\mu(x)\vdash\mu(y) - \mu(x\vdash y)) \\ &= \gamma(f\vv(x,y))
\end{align*} where $f\vv(x,y) = \mu(x)\vdash\mu(y) - \mu(x\vdash y)$ and since $\gamma = \theta|_{R/(\FR)}$. Similarly, one computes \begin{align*}
    & \beta\dd(x,y) = \gamma(f\dd(x,y)), \\ & \beta\pp(x,y) = \gamma(f\pp(x,y))
\end{align*} for $f\dd(x,y) = \mu(x)\dashv\mu(y) - \mu(x\dashv y)$ and $f\pp(x,y) = \mu(x)\perp\mu(y) - \mu(x\perp y)$. Thus \[\Tra(\gamma) = \overline{(\gamma\circ f\vv,\gamma\circ f\dd,\gamma\circ f\pp)} = \overline{(\beta\vv,\beta\dd,\beta\pp)} = \overline{(g\vv,g\dd,g\pp)}\] and $\Tra$ is surjective.
\end{proof}

\begin{lem}\label{dias set lemma}
If $C\subseteq A$ and $C\subseteq B$, then $A/C\cap B/C = (A\cap B)/C$.
\end{lem}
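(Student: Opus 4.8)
The plan is to prove the set equality by double inclusion, after first pinning down the ambient setting. Here $A$, $B$, and $C$ are subalgebras (ideals) of a common algebra with $C\subseteq A$ and $C\subseteq B$, so that $A/C$, $B/C$, and $(A\cap B)/C$ are all naturally realized as subalgebras of the single quotient of that ambient algebra by $C$. The intersection on the left-hand side is then taken inside this common quotient, and every element on either side is a coset of the form $x+C$. Making this identification explicit is the only conceptual point; once it is in place, the argument is the standard linear-algebraic fact and uses no triassociative structure.

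For the inclusion $(A\cap B)/C\subseteq (A/C)\cap (B/C)$, I would simply invoke monotonicity: since $A\cap B\subseteq A$ and $A\cap B\subseteq B$, any coset $x+C$ with representative $x\in A\cap B$ lies in both $A/C$ and $B/C$, hence in their intersection.

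For the reverse inclusion, take a coset $\xi\in (A/C)\cap (B/C)$. Then $\xi=a+C$ for some $a\in A$ and, simultaneously, $\xi=b+C$ for some $b\in B$. The equality $a+C=b+C$ gives $a-b\in C$, and here the hypothesis $C\subseteq B$ is used: it yields $a=b+(a-b)\in B$, so $a\in A\cap B$ and therefore $\xi=a+C\in (A\cap B)/C$. This transport of a representative into the intersection is precisely the step where $C\subseteq A$ and $C\subseteq B$ enter, and it is the heart of the argument.

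The main obstacle, such as it is, is entirely bookkeeping rather than mathematical depth: one must be careful that the three quotients are viewed as subsets of one and the same quotient space so that the intersection is meaningful, and that the containment hypotheses are applied exactly at the moment a representative is moved from one summand into $A\cap B$. No deeper properties of the three products $\vdash,\dashv,\perp$ are required, since the statement holds verbatim at the level of underlying vector spaces.
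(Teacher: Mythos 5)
Your proof is correct: both inclusions are argued properly, and the key step of moving the representative $a$ into $A\cap B$ via $a-b\in C\subseteq B$ is exactly where the hypotheses are needed. The paper does not write out an argument at all---it simply cites the Lie-algebra analogue in Batten's dissertation---and your explicit double-inclusion argument is precisely that standard proof, so the approaches coincide.
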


\begin{proof}
Follows by the same logic as the Lie analogue \cite{batten}.
\end{proof}

\begin{thm}
Let $L$ be a triassociative algebra over a field $\F$ and $0\xrightarrow{} R\xrightarrow{} F\xrightarrow{} L\xrightarrow{} 0$ be a free presentation of $L$. Then \[\cH^2(L,\F) \cong \frac{F'\cap R}{\FR}.\] In particular, if $L$ is finite-dimensional, then $M(L)\cong \cH^2(L,\F)$.
\end{thm}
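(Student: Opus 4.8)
The plan is to run the transgression machinery on the canonical central extension
$0\xrightarrow{} \frac{R}{\FR}\xrightarrow{} \frac{F}{\FR}\xrightarrow{\phi} L\xrightarrow{} 0$
and to use the free presentation to force $\Tra$ to be surjective. Write $\overline{F}=\frac{F}{\FR}$ and $Z=\frac{R}{\FR}$, so that $Z$ is a central ideal of $\overline{F}$ with $\overline{F}/Z\cong L$. Applying Theorem \ref{dias if tra surj} to the algebra $\overline{F}$ and its central ideal $Z$ identifies $\overline{F}'\cap Z$ with the image of $\Hom(Z,\F)$ under the transgression map into $\cH^2(\overline{F}/Z,\F)=\cH^2(L,\F)$. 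By Lemma \ref{dias tra surj}, this transgression map is surjective precisely because the extension arises from a free presentation, so its image is all of $\cH^2(L,\F)$. Hence $\overline{F}'\cap Z\cong \cH^2(L,\F)$, and the theorem is reduced to the algebraic identity $\overline{F}'\cap Z=\frac{F'\cap R}{\FR}$.

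To establish that identity I would first record the two containments $\FR\subseteq F'$ and $\FR\subseteq R$. Both hold because $R\subseteq F$ is an ideal: each of $F\lozenge R$ and $R\lozenge F$ lies in $F\lozenge F=F'$, and, since $R$ is an ideal of $F$, each also lies in $R$. The first containment gives $\overline{F}'=\frac{F}{\FR}\lozenge\frac{F}{\FR}=\frac{F'+\FR}{\FR}=\frac{F'}{\FR}$, while $Z=\frac{R}{\FR}$ by definition. Taking $C=\FR$, $A=F'$, and $B=R$ in Lemma \ref{dias set lemma} (whose hypotheses $C\subseteq A$ and $C\subseteq B$ are exactly the two containments just noted) then yields $\overline{F}'\cap Z=\frac{F'}{\FR}\cap\frac{R}{\FR}=\frac{F'\cap R}{\FR}$. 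Combined with the previous paragraph, this gives $\cH^2(L,\F)\cong \frac{F'\cap R}{\FR}$.

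For the final assertion, when $L$ is finite-dimensional, Theorem \ref{dias batten 1.12} identifies the multiplier $M(L)$ with $D=\frac{F'\cap R}{\FR}$; composing with the isomorphism just obtained gives $M(L)\cong \cH^2(L,\F)$.

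The verifications are short, but the step I expect to require the most care is the correct invocation of Theorem \ref{dias if tra surj}: one must apply it with $\overline{F}$ playing the role of the ambient algebra, $Z=R/(\FR)$ playing the role of the central ideal, and $\overline{F}/Z\cong L$ supplying the quotient, so that the codomain of $\Tra$ is genuinely $\cH^2(L,\F)$. The only other point needing attention is the bookkeeping around $\overline{F}'=F'/(\FR)$, which depends on $\FR\subseteq F'$; everything else is direct assembly of the results already proved in this section.
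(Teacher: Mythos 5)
Your proposal is correct and follows essentially the same route as the paper's own proof: the same central extension $0\to R/(\FR)\to F/(\FR)\to L\to 0$, surjectivity of the transgression via Lemma \ref{dias tra surj}, the identification $\overline{F}'\cap \overline{R}\cong \cH^2(L,\F)$ via Theorem \ref{dias if tra surj}, the intersection identity via Lemma \ref{dias set lemma}, and Theorem \ref{dias batten 1.12} for the multiplier statement. Your extra care in checking $\FR\subseteq F'$, $\FR\subseteq R$, and $\overline{F}'=F'/(\FR)$ is detail the paper leaves implicit, but it is the same argument.
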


\begin{proof}
Let $\overline{R} = \frac{R}{\FR}$ and $\overline{F} = \frac{F}{\FR}$. Then $0\xrightarrow{} \overline{R}\xrightarrow{} \overline{F}\xrightarrow{} \LL\xrightarrow{} 0$ is a central extension. By Lemma \ref{dias tra surj}, $\Tra:\Hom(\overline{R},\F)\xrightarrow{} \cH^2(\LL,\F)$ is surjective. By Theorem \ref{dias if tra surj}, \[\overline{F}'\cap \overline{R} \cong \cH^2(\overline{F}/\overline{R},\F) \cong \cH^2(\LL,\F).\] By Lemma \ref{dias set lemma}, \[\overline{F}'\cap \overline{R} \cong \frac{F'}{\FR} \cap \frac{R}{\FR} = \frac{F'\cap R}{\FR}.\] Therefore, \[M(\LL) = \frac{F'\cap R}{\FR} \cong \cH^2(\LL,\F)\] by the characterization of $M(L)$ from Theorem \ref{dias batten 1.12}.
\end{proof}

\section{Unicentral Algebras}\label{batten 4}
For a triassociative algebra $L$, let $Z^*(L)$ denote the intersection of all images $\U(Z(E))$ such that $0\xrightarrow{} \ker \U\xrightarrow{} E\xrightarrow{\U} L\xrightarrow{} 0$ is a central extension of $L$. It is easy to see that $Z^*(L)\subseteq Z(L)$. We say that a triassociative algebra $L$ is \textit{unicentral} if $Z(L) = Z^*(L)$. In this section, we develop criteria for when the center of the cover of $L$ maps onto the center of $L$. One of these criteria will take the form of when $Z(L)\subseteq Z^*(L)$, or when the algebra is unicentral.

\subsection{More Sequences}
We first extend our Hochschild-Serre sequence. Given a central ideal $Z$ in $L$, consider the central extension $0\xrightarrow{} Z\xrightarrow{} L\xrightarrow{} L/Z\xrightarrow{} 0$. To define our extension map, let $(f\vv',f\dd',f\pp')\in \cZ^2(L,\F)$ and define six bilinear forms \begin{align*}
    & f\vv'':L/L'\times Z\xrightarrow{} \F, && f\dd'':L/L'\times Z\xrightarrow{} \F, && f\pp'':L/L'\times Z\xrightarrow{} \F, \\ & g\vv'':Z\times L/L'\xrightarrow{} \F, && g\dd'':Z\times L/L'\xrightarrow{} \F, && g\pp'':Z\times L/L'\xrightarrow{} \F
\end{align*} by \begin{align*}
    & f\vv''(x+L',z) = f\vv'(x,z), && f\dd''(x+L',z) = f\dd'(x,z), && f\pp''(x+L',z) = f\pp'(x,z), \\
    & g\vv''(z,x+L') = f\vv'(z,x), && g\dd''(z,x+L') = f\dd'(z,x), && g\pp''(z,x+L') = f\pp'(z,x)
\end{align*} for $x\in L$, $z\in Z$. To check that these four maps are well-defined, one computes \begin{align*}
    & f\vv''(x* y+L',z) = 0, &&f\dd''(x* y+L',z) = 0, && f\pp''(x* y+L',z) = 0, \\ & g\vv''(z,x* y+L') = 0, && g\dd''(z,x* y+L') = 0, && g\pp''(z,x* y+L') = 0
\end{align*} for $*\in \{\vdash,\dashv,\perp\}$ via the identities of $(f\vv',f\dd',f\pp')$ and since $z\in Z(L)$. In short, the triassociative cocycle conditions ensure that there is always a way to associate $z$ with one of $x$ or $y$, making each bilinear form return zero. Hence \begin{align*}
    (f\vv'',g\vv'',f\dd'',g\dd'',f\pp'',g\pp'') &\in (\Bil(L/L'\times Z, \F)\oplus \Bil(Z\times L/L', \F))^3 \\ &\cong (L/L'\otimes Z \oplus Z\otimes L/L')^3.
\end{align*} Now consider $(f\vv',f\dd',f\pp')\in \cB^2(L,\F)$. Then there exists a linear transformation $\E:L\xrightarrow{} \F$ such that $f\vv'(x,y) = -\E(x\vdash y)$, $f\dd'(x,y) = -\E(x\dashv y)$, and $f\pp'(x,y) = -\E(x\perp y)$ for all $x,y\in L$. One computes $f_*''(x+L',z) = f_*'(x,z) = -\E(x* z) = 0$ and $g_*''(z,x+L') = f_*'(z,x) = -\E(z* x) = 0$ for $*\in \{\vdash,\dashv,\perp\}$ since $z\in Z(L)$. Hence, a map \[\delta:(f\vv',f\dd',f\pp')+\cB^2(L,\F)\mapsto (f\vv'',g\vv'',f\dd'',g\dd'',f\pp'',g\pp'')\] is induced that is clearly linear since $f\vv'$, $f\dd'$, $f\pp'$, $f\vv''$, $g\vv''$, $f\dd''$, $g\dd''$, $f\pp''$, $g\pp''$ are all in vector spaces of bilinear forms and the latter six are defined by the first three.

\begin{thm}\label{dias batten 4.1}
Let $Z$ be a central ideal of a triassociative algebra $L$. The sequence \[\cH^2(L/Z,\F)\xrightarrow{\Inf} \cH^2(L,\F)\xrightarrow{\delta} (L/L'\otimes Z \oplus Z\otimes L/L')^3\] is exact.
\end{thm}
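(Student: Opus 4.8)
The plan is to prove exactness at $\cH^2(L,\F)$ by establishing the two inclusions $\ima(\Inf)\subseteq\ker(\delta)$ and $\ker(\delta)\subseteq\ima(\Inf)$ separately. Throughout I write $\beta:L\to L/Z$ for the quotient map and recall that $\Inf$ sends a class $\overline{(f\vv,f\dd,f\pp)}$ to $\overline{(f\vv',f\dd',f\pp')}$, where $f_*'(x,y)=f_*(\beta(x),\beta(y))$ for $*\in\{\vdash,\dashv,\perp\}$, while $\delta$ sends a class to the six restricted forms $(f\vv'',g\vv'',f\dd'',g\dd'',f\pp'',g\pp'')$ built from any representative (this is well-defined since coboundaries restrict to zero, as already verified).

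For the first inclusion I would start from an inflated representative $(f\vv',f\dd',f\pp')$ and observe that an argument lying in $Z$ is annihilated by $\beta$. Concretely, for $z\in Z$ one has $\beta(z)=0$, so bilinearity of $f_*$ gives $f_*'(x,z)=f_*(\beta(x),0)=0$ and $f_*'(z,x)=f_*(0,\beta(x))=0$. Hence every defining form of $\delta$ vanishes on this representative, namely $f_*''(x+L',z)=f_*'(x,z)=0$ and $g_*''(z,x+L')=f_*'(z,x)=0$, so $\delta\circ\Inf=0$ and $\ima(\Inf)\subseteq\ker(\delta)$.

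The substance of the argument is the reverse inclusion. Given a class in $\ker(\delta)$ with representative $(f\vv',f\dd',f\pp')\in\cZ^2(L,\F)$, the hypothesis $\delta(\text{class})=0$ unwinds to exactly the vanishing conditions $f_*'(x,z)=0$ and $f_*'(z,x)=0$ for all $x\in L$, $z\in Z$, and $*\in\{\vdash,\dashv,\perp\}$. I would use these to push the cocycle down to $L/Z$: set $f_*(\overline{x},\overline{y})=f_*'(x,y)$ and check independence of the chosen representatives, since for $z_1,z_2\in Z$ bilinearity together with the vanishing conditions gives $f_*'(x+z_1,y+z_2)=f_*'(x,y)$. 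Because $\beta$ is a homomorphism, each cocycle identity for $(f\vv',f\dd',f\pp')$ transfers to $(f\vv,f\dd,f\pp)$ via $\overline{x}\vdash\overline{y}=\overline{x\vdash y}$ and its analogues, so $(f\vv,f\dd,f\pp)\in\cZ^2(L/Z,\F)$. Inflating this descended cocycle returns $f_*(\beta(x),\beta(y))=f_*(\overline{x},\overline{y})=f_*'(x,y)$, i.e. the original representative, which exhibits the class in $\ima(\Inf)$.

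I expect the only genuine obstacle to be the well-definedness of the descent, and the point worth emphasizing is that it requires vanishing in both argument slots. This is precisely why the target of $\delta$ is the direct sum $(L/L'\otimes Z\oplus Z\otimes L/L')^3$ rather than a single copy: the $L/L'\otimes Z$ factors record vanishing when $Z$ sits on the right and the $Z\otimes L/L'$ factors record it on the left, and I need both to quotient out $Z$ in each variable. Once well-definedness is secured, the transfer of the eleven cocycle conditions and the final inflation computation are routine.
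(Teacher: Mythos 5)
Your proposal is correct and follows essentially the same route as the paper's proof: the forward inclusion by noting that inflated cocycles vanish whenever an argument lies in $Z$ (since $z+Z=0$ in $L/Z$), and the reverse inclusion by using the vanishing conditions from $\ker(\delta)$ to descend the cocycle to a well-defined element of $\cZ^2(L/Z,\F)$ whose inflation recovers the original representative. Your added emphasis on needing vanishing in both argument slots, and its connection to the two tensor factors in the target of $\delta$, is a fair gloss on what the paper does implicitly.
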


\begin{proof}
Consider $(f\vv,f\dd,f\pp)\in \cZ^2(L/Z,\F)$. Then \[\Inf((f\vv,f\dd,f\pp) + \cB^2(L/Z, \F)) = (f\vv',f\dd',f\pp') + \cB^2(L,\F)\] where $f\vv'(x,y) = f\vv(x+Z,y+Z)$, $f\dd'(x,y) = f\dd(x+Z,y+Z)$, and $f\pp'(x,y) = f\pp(x+Z,y+Z)$ for $x,y\in L$. Moreover, \[\delta((f\vv',f\dd',f\pp')+\cB^2(L,\F)) = (f\vv'',g\vv'',f\dd'',g\dd'',f\pp'',g\pp'')\] where
\begin{align*}
    & f\vv''(x+L',z) = f\vv'(x,z) = f\vv(x+Z,z+Z) = 0, \\
    & g\vv''(z,x+L') = f\vv'(z,x) = f\vv(z+Z,x+Z) = 0, \\ &f\dd''(x+L',z) = f\dd'(x,z) = f\dd(x+Z,z+Z) = 0, \\
    & g\dd''(z,x+L') = f\dd'(z,x) = f\dd(z+Z,x+Z) = 0, \\ & f\pp''(x+L',z) = f\pp'(x,z) = f\pp(x+Z,z+Z) = 0, \\
    & g\pp''(z,x+L') = f\pp'(z,x) = f\pp(z+Z,x+Z) = 0 \\
\end{align*} for all $x\in L$ and $z\in Z$. Thus \begin{align*}
    \delta(\Inf((f\vv,f\dd,f\pp) + \cB^2(L/Z,\F))) &= \delta((f\vv',f\dd',f\pp')+\cB^2(L,\F)) \\ &= (f\vv'',g\vv'',f\dd'',g\dd'',f\pp'',g\pp'') \\ &= (0,0,0,0,0,0)
\end{align*} which implies that $\ima(\Inf)\subseteq \ker \delta$.

Conversely, suppose $\delta((f\vv',f\dd',f\pp')+\cB^2(L,\F)) = (f\vv'',g\vv'',f\dd'',g\dd'',f\pp'',g\pp'') = (0,0,0,0,0,0)$ for some cocycle $(f\vv',f\dd',f\pp')\in \cZ^2(L,\F)$. In other words, $0 = f_*''(x+L',z) = f_*'(x,z)$ and $0 = g_*''(z,x+L') = f_*'(z,x)$ for all $x\in L$, $z\in Z$, and $*\in \{\vdash,\dashv,\perp\}$. Hence \begin{align*}
    & f\vv'(x+z,y+z') = f\vv'(x,y) + f\vv'(x,z') + f\vv'(z,y) + f\vv'(z,z') = f\vv'(x,y), \\ & f\dd'(x+z,y+z') = f\dd'(x,y) + f\dd'(x,z') + f\dd'(z,y) + f\dd'(z,z') = f\dd'(x,y), \\ & f\pp'(x+z,y+z') = f\pp'(x,y) + f\pp'(x,z') + f\pp'(z,y) + f\pp'(z,z') = f\pp'(x,y)
\end{align*} for all $z,z'\in Z$, which implies that the bilinear forms \begin{align*}
    g\vv:L/Z\times L/Z\xrightarrow{} \F, && g\dd:L/Z\times L/Z\xrightarrow{} \F, && g\pp:L/Z\times L/Z\xrightarrow{} \F
\end{align*} defined by \begin{align*}
    g\vv(x+Z,y+Z) = f\vv'(x,y), && g\dd(x+Z,y+Z) = f\dd'(x,y), && g\pp(x+Z,y+Z) = f\pp'(x,y)
\end{align*} are well-defined. Furthermore, $(g\vv,g\dd,g\pp)\in \cZ^2(L/Z,\F)$ since $(f\vv',f\dd',f\pp')$ is a cocycle. Thus \[\Inf((g\vv,g\dd,g\pp) + \cB^2(L/Z,\F)) = (f\vv',f\dd',f\pp')+\cB^2(L,\F)\] which implies that $\ker \delta \subseteq \ima(\Inf)$.
\end{proof}

One of our criteria will involve this $\delta$ map. Another will involve the \textit{natural map} $\beta$ that appears in the following analogue of the Ganea sequence.

\begin{thm}\label{dias batten 4.2}
Let $Z$ be a central ideal of a finite-dimensional triassociative algebra $L$. Then the sequence \[(L/L'\otimes Z \oplus Z\otimes L/L')^3 \xrightarrow{} M(L) \xrightarrow{\beta} M(L/Z)\xrightarrow{} L'\cap Z\xrightarrow{} 0\] is exact.
\end{thm}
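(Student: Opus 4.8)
The plan is to establish exactness of the Ganea-type sequence
\[
(L/L'\otimes Z \oplus Z\otimes L/L')^3 \xrightarrow{} M(L) \xrightarrow{\beta} M(L/Z)\xrightarrow{} L'\cap Z\xrightarrow{} 0
\]
by working through a free presentation and reducing each exactness claim to the cohomological machinery already developed. First I would fix a free presentation $0\to R\to F\xrightarrow{\U} L\to 0$ and let $S$ be the preimage $\U^{-1}(Z)$, so that $0\to R\to S\to Z\to 0$ and $0\to S\to F\to L/Z\to 0$ are the induced presentations of $Z$ and $L/Z$ respectively. Using the earlier identifications $M(L)\cong (F'\cap R)/(\FR)$ and $M(L/Z)\cong (F'\cap S)/(F\lozenge S + S\lozenge F)$, the natural map $\beta$ is induced by the inclusion $F'\cap R \hookrightarrow F'\cap S$, and the rightmost map $M(L/Z)\to L'\cap Z$ should be induced by $\U$, sending the class of an element of $F'\cap S$ to its image in $Z$, which lands in $L'\cap Z$ since $\U(F') = L'$. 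The leftmost map into $M(L)$ I would identify with the transpose/dual of the $\delta$ map from Theorem \ref{dias batten 4.1}, using the duality between $\cH^2(L,\F)\cong M(L)^*$ and the tensor-power target space over the field $\F$.

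The cleanest route is to prove exactness by dualizing Theorem \ref{dias batten 4.1} and splicing it with the transgression isomorphism of Theorem \ref{dias if tra surj}. Since $\F$ is a field and all the spaces in sight are finite-dimensional, applying $\Hom(-,\F)$ to the exact sequence of Theorem \ref{dias batten 4.1},
\[
\cH^2(L/Z,\F)\xrightarrow{\Inf} \cH^2(L,\F)\xrightarrow{\delta} (L/L'\otimes Z \oplus Z\otimes L/L')^3,
\]
yields an exact sequence running the other way, and the dual of $(L/L'\otimes Z\oplus Z\otimes L/L')^3$ is canonically isomorphic to itself (again using that the module is over $\F$ and finite-dimensional). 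The theorem $M(L)\cong \cH^2(L,\F)$ then converts the cohomology groups into multipliers. The task at the right-hand end is to show that the cokernel of $\beta$ (equivalently, after dualizing, the kernel of the map $\cH^2(L/Z,\F)\to\cH^2(L,\F)$) is exactly $L'\cap Z$; here I would invoke Theorem \ref{dias if tra surj}, which already gives $L'\cap Z$ as the image of $\Hom(Z,\F)$ under transgression, and identify this image with $\ker(\Inf)$ via exactness of the five-term Hochschild–Serre sequence. This pins down the rightmost surjection onto $L'\cap Z$ and its exactness.

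I would then verify exactness at the two internal spots separately. Exactness at $M(L/Z)$ amounts to showing that the image of $\beta$ equals the kernel of the map to $L'\cap Z$; dualized, this is precisely exactness of the Hochschild–Serre sequence at $\cH^2(L/Z,\F)$ spliced with the connecting data, which has effectively been established. Exactness at $M(L)$ is the dual of exactness of Theorem \ref{dias batten 4.1} at $\cH^2(L,\F)$, i.e. $\ker\beta = \ima\delta^*$, where $\delta^*$ is the dualized $\delta$ furnishing the leftmost map. The main obstacle I anticipate is bookkeeping the dualization correctly: one must check that $\beta$ really is the dual of $\Inf$ (not merely abstractly isomorphic to it), and that the connecting map to $L'\cap Z$ is compatible with the transgression identification of Theorem \ref{dias if tra surj}. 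Because every space is finite-dimensional over $\F$, exactness is preserved under $\Hom(-,\F)$, so no homological subtleties intervene beyond confirming that the four maps in the sequence are the honest duals of the maps appearing in Theorems \ref{dias if tra surj} and \ref{dias batten 4.1}. Once these identifications are nailed down, exactness transfers formally and the proof concludes.
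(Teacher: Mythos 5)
Your approach is genuinely different from the paper's, so it is worth saying what each buys. The paper proves exactness at $M(L)$ by explicit construction in a free presentation $L=F/R$, $Z=T/R$: it defines six bilinear maps $\theta_*,\alpha_*$ on $\frac{T}{R}\times\frac{F}{R+F'}$ (and in the reversed order) by multiplication in $F$, verifies well-definedness using the triassociative axioms (e.g.\ that $T\dashv F'\subseteq \FR$), assembles them into a single linear map $\overline{\theta}$ on the tensor space, and computes $\ima(\overline{\theta})=\frac{F\lozenge T+T\lozenge F}{\FR}=\ker\beta$; the right-hand portion is quoted from the diassociative case. You instead propose to apply $\Hom(-,\F)$ to the sequence of Theorem \ref{dias batten 4.1} and to the five-term Hochschild--Serre sequence, and to transport exactness across the identification of multipliers with second cohomology. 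The idea is sound in outline: over a field dualization is exact, the dual of Theorem \ref{dias batten 4.1} spliced with $\ker(\Inf_2)=\ima(\Tra)\cong L'\cap Z$ (Theorem \ref{dias if tra surj}) is formally the Ganea sequence, and your free-presentation descriptions of $\beta$ and of the map $M(L/Z)\to L'\cap Z$ are correct. This route would avoid all computation inside the free algebra.

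The genuine gap is that the two identifications on which everything rests are flagged as ``bookkeeping'' but never established, and neither is available from the paper's results as stated. You need (i) a \emph{natural perfect pairing} $\cH^2(L,\F)\times M(L)\to\F$, i.e.\ a canonical isomorphism $\cH^2(L,\F)\cong M(L)^*$, whereas the paper's theorem only produces an abstract isomorphism $M(L)\cong\cH^2(L,\F)$ via surjectivity of the transgression --- an isomorphism of finite-dimensional vector spaces carries no naturality you can cite; and (ii) the compatibility $\langle\Inf(c),m\rangle_{L}=\langle c,\beta(m)\rangle_{L/Z}$, which requires choosing compatible free presentations of $L$ and $L/Z$ together with compatible sections and checking, up to coboundary, that $\Inf\circ\Tra_{L/Z}(\chi)=\Tra_{L}(\chi\circ q)$ for the natural quotient $q$ between the two presentations. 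Both statements are in fact provable with the paper's tools (the pairing $\langle\Tra(\chi),m\rangle=\chi(m)$ is well defined because $\ker\Tra=\ima\Res$ consists of functionals vanishing on $\overline{F}'\cap\overline{R}$, and the compatibility follows from a section computation), but these verifications constitute essentially all of the mathematical content of the theorem under your strategy; until they are written out, nothing transfers. A smaller slip: the dual of $(L/L'\otimes Z\oplus Z\otimes L/L')^3$ is isomorphic to itself only non-canonically; this is harmless here solely because the statement does not name the leftmost map, so you may compose with an arbitrary isomorphism.
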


\begin{proof}
Let $F$ be a free triassociative algebra such that $L=F/R$ and $Z=T/R$ for ideals $T$ and $R$ of $F$. Since $Z\subseteq Z(L)$, we have $T/R\subseteq Z(F/R)$, and thus $F\lozenge T + T\lozenge F\subseteq R$. The exactness of \[M(L)\xrightarrow{\beta} M(L/Z)\xrightarrow{\gamma} L'\cap Z\xrightarrow{} 0\] follows similarly to the diassociative case. It remains to show that \[(L/L'\otimes Z \oplus Z\otimes L/L')^3 \xrightarrow{} M(L) \xrightarrow{\beta} M(L/Z)\] is exact. Define six maps \begin{align*}
    & \theta\vv:\frac{T}{R}\times \frac{F}{R+F'}\xrightarrow{} \frac{R\cap F'}{\FR}, && \alpha\vv:\frac{F}{R+F'}\times \frac{T}{R}\xrightarrow{} \frac{R\cap F'}{\FR}, \\ & \theta\dd:\frac{T}{R}\times \frac{F}{R+F'}\xrightarrow{} \frac{R\cap F'}{\FR}, && \alpha\dd:\frac{F}{R+F'}\times, \frac{T}{R}\xrightarrow{} \frac{R\cap F'}{\FR}, \\ & \theta\pp:\frac{T}{R}\times \frac{F}{R+F'}\xrightarrow{} \frac{R\cap F'}{\FR}, && \alpha\pp:\frac{F}{R+F'}\times \frac{T}{R}\xrightarrow{} \frac{R\cap F'}{\FR}
\end{align*} by \begin{align*}
    & \theta\vv(\overline{t},\overline{f}) = t\vdash f + (\FR), && \alpha\vv(\overline{f},\overline{t}) = f\vdash t+ (\FR), \\ & \theta\dd(\overline{t},\overline{f}) = t\dashv f+ (\FR), && \alpha\dd(\overline{f},\overline{t}) = f\dashv t + (\FR), \\ & \theta\pp(\overline{t},\overline{f}) = t\perp f + (\FR), && \alpha\pp(\overline{f},\overline{t}) = f\perp t+ (\FR)
\end{align*} for $t\in T$, $f\in F$. These maps are bilinear since multiplication is bilinear. To check that they are well-defined, suppose $(t+R, f+(R+F')) = (t'+R, f'+(R+F'))$ for $t,t'\in T$ and $f,f'\in F$. Then $t-t'\in R$ and $f-f'\in R+F'$, which implies that $t=t'+r$ and $f=f'+x$ for some $r\in R$ and $x\in R+F'$. One computes \begin{align*}
    t\dashv f - t'\dashv f' &= (t'+r)\dashv (f'+x) - t'\dashv f' \\ &= r\dashv f' + r\dashv x + t'\dashv x \\ &\in (R\dashv F) + (R\dashv F) + (T\dashv R + T\dashv F')
\end{align*} which is contained in $\FR$ since \begin{align*}
    T\dashv F' &= T\dashv (F\vdash F) + T\dashv (F\dashv F) + T\dashv (F\perp F) \\ &= (T\dashv F)\dashv F + (T\dashv F)\dashv F + (T\dashv F)\dashv F
\end{align*} and $T\dashv F\subseteq R$. Next, \begin{align*} t\vdash f - t'\vdash f' &= (t'+r)\vdash (f'+x) - t'\vdash f' \\ &= r\vdash f' + r\vdash x + t'\vdash x \\ &\in (R\vdash F) + (R\vdash F) + (T\vdash R + T\vdash F')
\end{align*} which is also contained in $\FR$ since \begin{align*}
    T\vdash F' &= T\vdash (F\vdash F) + T\vdash(F\dashv F) + T\vdash(F\perp F) \\ &= (T\vdash F)\vdash F +(T\vdash F)\dashv F + (T\vdash F)\perp F
\end{align*} and $T\vdash F\subseteq R$. Finally, \begin{align*}
    t\perp f - t'\perp f' & = (t'+r)\perp(f'+x) - t'\perp f' \\ &= r\perp f' + r\perp x + t'\perp x \\ & \in (R\perp F) + (R\perp F) + (T\perp R + T\perp F')
\end{align*} which is contained in $\FR$ since \begin{align*}
    T\perp F' &= T\perp (F\vdash F) + T\perp(F\dashv F) + T\perp(F\perp F) \\ &= (T\dashv F)\perp F + (T\perp F)\dashv F + (T\perp F)\perp F
\end{align*} and $T\perp F$ is also contained in $R$. Expressions $f*t - f'*t'$ for $*\in\{\vdash,\dashv,\perp\}$ fall in $\FR$ by similar manipulations. Thus, our bilinear forms $\theta_*$ and $\alpha_*$ are well-defined, and so induce linear maps \begin{align*}
    & \overline{\theta\vv}:\frac{T}{R}\otimes \frac{F}{R+F'}\xrightarrow{} \frac{R\cap F'}{\FR}, && \overline{\alpha\vv}:\frac{F}{R+F'}\otimes \frac{T}{R}\xrightarrow{} \frac{R\cap F'}{\FR}, \\ & \overline{\theta\dd}:\frac{T}{R}\otimes \frac{F}{R+F'}\xrightarrow{} \frac{R\cap F'}{\FR}, && \overline{\alpha\dd}:\frac{F}{R+F'}\otimes \frac{T}{R}\xrightarrow{} \frac{R\cap F'}{\FR}, \\ & \overline{\theta\pp}:\frac{T}{R}\otimes \frac{F}{R+F'}\xrightarrow{} \frac{R\cap F'}{\FR}, && \overline{\alpha\pp}:\frac{F}{R+F'}\otimes \frac{T}{R}\xrightarrow{} \frac{R\cap F'}{\FR}.
\end{align*} These, in turn, yield a linear transformation \[\overline{\theta}:\left(\frac{F}{R+F'}\otimes \frac{T}{R} \oplus \frac{T}{R}\otimes \frac{F}{R+F'}\right)^3 \xrightarrow{} \frac{R\cap F'}{\FR}\] defined by $\overline{\theta}(a,b,c,d,e,f) = \overline{\alpha\vv}(a) + \overline{\theta\vv}(b) + \overline{\alpha\dd}(c) + \overline{\theta\dd}(d) + \overline{\alpha\pp}(e) + \overline{\theta\pp}(f)$. The image of $\overline{\theta}$ is \[\frac{F\lozenge T+T\lozenge F}{\FR}\] which is precisely equal to $\{x + (\FR)~|~ x\in R\cap F',~ x\in F\lozenge T + T\lozenge F\} = \ker \beta$.
\end{proof}

\begin{cor}
(Stallings Sequence) Let $Z$ be a central ideal in a finite-dimensional triassociative algebra $L$. Then the sequence \[M(L)\xrightarrow{} M(L/Z)\xrightarrow{} Z\xrightarrow{} L/L'\xrightarrow{} \frac{L}{Z+L'}\xrightarrow{} 0\] is exact.
\end{cor}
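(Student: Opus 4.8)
The plan is to obtain the Stallings sequence by splicing the Ganea-type sequence of Theorem \ref{dias batten 4.2} onto a short elementary portion built from inclusions and projections, exactly as in the Lie and diassociative cases. Theorem \ref{dias batten 4.2} already supplies the exactness of
\[
M(L)\xrightarrow{\beta} M(L/Z)\xrightarrow{\gamma} L'\cap Z\xrightarrow{} 0,
\]
so in particular $\gamma$ is surjective onto $L'\cap Z$ and $\ima(\beta) = \ker(\gamma)$. The only remaining task is to continue the sequence past $L'\cap Z$ and to relabel the third map so that it lands in $Z$ rather than in $L'\cap Z$.

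To do this, I would define the four maps of the Stallings sequence as follows. The first map is $\beta$. The second, $M(L/Z)\to Z$, is the composite of $\gamma$ with the inclusion $L'\cap Z\hookrightarrow Z$. The third, $Z\to L/L'$, is the restriction to $Z$ of the canonical projection $L\to L/L'$, i.e. $z\mapsto z + L'$. The fourth, $L/L'\to L/(Z+L')$, is the canonical projection induced by the containment $L'\subseteq Z+L'$. Each of these is clearly a well-defined linear map.

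Exactness is then checked at each interior term, and in every case it reduces to an identification of images and kernels. At $M(L/Z)$: since $L'\cap Z\hookrightarrow Z$ is injective, the kernel of $M(L/Z)\to Z$ equals $\ker(\gamma)$, which equals $\ima(\beta)$ by Theorem \ref{dias batten 4.2}. At $Z$: the image of $M(L/Z)\to Z$ is $\ima(\gamma) = L'\cap Z$ by surjectivity of $\gamma$, while the kernel of $z\mapsto z+L'$ is $Z\cap L'$, and these coincide. At $L/L'$: the image of $Z\to L/L'$ is $(Z+L')/L'$, which is precisely the kernel of the projection $L/L'\to L/(Z+L')$. Exactness at $L/(Z+L')$ is just the surjectivity of that projection.

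The genuine content of the corollary lies entirely in Theorem \ref{dias batten 4.2}; the derivation here is a formal diagram-splicing argument, so I do not expect a real obstacle. The only point requiring any care is the bookkeeping distinction between the target $L'\cap Z$ of $\gamma$ and the target $Z$ of the spliced map, together with the observation $L'\cap Z = Z\cap L'$, which makes exactness at $Z$ immediate.
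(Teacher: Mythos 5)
Your proposal is correct and matches the paper's intent: the paper's proof is simply a deferral to the diassociative case, and the argument there is exactly this splicing of the Ganea-type sequence of Theorem \ref{dias batten 4.2} with the inclusion $L'\cap Z\hookrightarrow Z$ and the canonical projections, with exactness at each term checked just as you do. Your write-up fills in the details the paper leaves implicit, and the bookkeeping (in particular $\ima(\gamma)=L'\cap Z=\ker(Z\to L/L')$) is handled correctly.
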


\begin{proof}
Follows by the same logic as the diassociative case.
\end{proof}

\subsection{Main Results}
The remainder of the paper follows similarly to its diassociative analogue \cite{mainellis batten di}. First, consider a free presentation $0\xrightarrow{} R\xrightarrow{} F\xrightarrow{\pi} L\xrightarrow{} 0$ of a finite-dimensional triassociative algebra $L$ and let $\overline{X}$ denote the quotient algebra $\frac{X}{\FR}$ for any $X$ such that $\FR\subseteq X\subseteq F$. Since $R=\ker \pi$ and $\FR\subseteq R$, $\pi$ induces a homomorphism $\overline{\pi}:\overline{F}\xrightarrow{} L$ such that the diagram \[\begin{tikzcd}
F\arrow[r,"\pi"]\arrow[d]& L\\
\overline{F} \arrow[ur, swap, "\overline{\pi}"]
\end{tikzcd}\] commutes. Since $\overline{R}\subseteq Z(\overline{F})$, there exists a complement $\frac{S}{\FR}$ to $\frac{R\cap F'}{\FR}$ in $\frac{R}{\FR}$ where $S\subseteq R\subseteq \ker \pi$ and $\overline{S}\subseteq \overline{R}\subseteq \ker\overline{\pi}$. Thus $\overline{\pi}$ induces a homomorphism $\pi_S:F/S\xrightarrow{} L$ such that the extension \[0\xrightarrow{} R/S\xrightarrow{} F/S\xrightarrow{\pi_S} L\xrightarrow{} 0\] is central. This extension is stem since \[R/S\cong \frac{R\cap F'}{\FR} = \ker \pi_S\] implies that $F/S$ is a cover of $L$. By applying the preceding results and this discussion, we obtain the main results.

\begin{thm}\label{dias batten 4.7}
For every free presentation $0\xrightarrow{} R\xrightarrow{} F\xrightarrow{\pi} L\xrightarrow{} 0$ of $L$ and every stem extension $0\xrightarrow{} \ker \U\xrightarrow{} E\xrightarrow{\U} L\xrightarrow{} 0$, one has $Z^*(L) = \overline{\pi}(Z(\overline{F})) = \U(Z(E))$.
\end{thm}

\begin{thm}\label{dias batten 4.9}
Let $Z$ be a central ideal of a finite-dimensional triassociative algebra $L$ and let \[\delta:M(L)\xrightarrow{} (L/L'\otimes Z\oplus Z\otimes L/L')^3\] be as in Theorem \ref{dias batten 4.1}. Then the following are equivalent:
\begin{enumerate}
    \item $\delta$ is the trivial map,
    \item the natural map $\beta$ is injective,
    \item $M(L)\cong \frac{M(L/Z)}{L'\cap Z}$,
    \item $Z\subseteq Z^*(L)$.
\end{enumerate}
\end{thm}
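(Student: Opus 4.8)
The plan is to show that the three cohomological/homological conditions (1), (2), (3) are each equivalent to a single numerical identity, and then to dispatch the remaining equivalence $(2)\Leftrightarrow(4)$ by unwinding $Z^*(L)$ through the free presentation. Throughout I fix a free presentation $0\to R\to F\to L\to 0$ and write $Z=T/R$ for an ideal $T$ with $R\subseteq T$, so that $L/Z\cong F/T$ and, since $Z$ is central, $\FR\subseteq F\lozenge T+T\lozenge F\subseteq R$. Because $L$ is finite-dimensional, so are $L/Z$, $M(L)$, $M(L/Z)$, and $L'\cap Z$, and the isomorphism $M(L)\cong\cH^2(L,\F)$ (and its analogue for $L/Z$) lets me pass freely between multipliers and second cohomology. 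I abbreviate by $(\star)$ the identity $\dim M(L)=\dim M(L/Z)-\dim(L'\cap Z)$.

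First I would extract $(\star)$ from the Ganea sequence of Theorem \ref{dias batten 4.2}. Writing that sequence as $(L/L'\otimes Z\oplus Z\otimes L/L')^3\xrightarrow{\overline\theta}M(L)\xrightarrow{\beta}M(L/Z)\xrightarrow{\gamma}L'\cap Z\to 0$, exactness at $M(L/Z)$ and surjectivity of $\gamma$ give $\dim\operatorname{Im}\beta=\dim M(L/Z)-\dim(L'\cap Z)$, while exactness at $M(L)$ gives $\ker\beta=\operatorname{Im}\overline\theta$ and hence $\dim M(L)=\dim\operatorname{Im}\overline\theta+\dim\operatorname{Im}\beta$. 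Thus $\beta$ is injective iff $\operatorname{Im}\overline\theta=0$ iff $(\star)$ holds, which is $(2)\Leftrightarrow(\star)$; and over the field $\F$ the short exact sequence produced by an injective $\beta$ splits, so $(2)$ yields $M(L)\cong M(L/Z)/(L'\cap Z)$, while $(3)$ forces the dimension equality $(\star)$, giving $(3)\Leftrightarrow(\star)$. For $(1)\Leftrightarrow(\star)$ I would use Theorem \ref{dias batten 4.1}, whose exactness gives $\ker\delta=\operatorname{Im}\Inf$, so that $\delta$ is trivial iff $\Inf$ is surjective iff $\dim\operatorname{Im}\Inf=\dim M(L)$. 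To compute $\dim\operatorname{Im}\Inf$ I invoke the Hochschild–Serre sequence: exactness at $\cH^2(L/Z,\F)$ gives $\ker\Inf=\operatorname{Im}\Tra$, and Theorem \ref{dias if tra surj} identifies $\operatorname{Im}\Tra\cong L'\cap Z$, so $\dim\operatorname{Im}\Inf=\dim M(L/Z)-\dim(L'\cap Z)$ and $(1)$ again becomes exactly $(\star)$.

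It remains to prove $(2)\Leftrightarrow(4)$, and here I would compute $Z^*(L)$ concretely. By Theorem \ref{dias batten 4.7}, $Z^*(L)=\overline\pi(Z(\overline F))$, where $\overline\pi\colon F/\FR\to F/R=L$ is the natural quotient. Letting $W=\{s\in F: s\lozenge F+F\lozenge s\subseteq\FR\}$ be the preimage of $Z(\overline F)$, one observes $R\subseteq W$ since $R\lozenge F+F\lozenge R=\FR$, whence $Z^*(L)=(W+R)/R=W/R$. Therefore $Z=T/R\subseteq Z^*(L)$ if and only if $T\subseteq W$, which says precisely that $F\lozenge T+T\lozenge F\subseteq\FR$, i.e.\ $F\lozenge T+T\lozenge F=\FR$. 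Since the proof of Theorem \ref{dias batten 4.2} identifies $\ker\beta=(F\lozenge T+T\lozenge F)/\FR$, this last condition is exactly the injectivity of $\beta$, establishing $(4)\Leftrightarrow(2)$ and completing the chain.

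I expect the main obstacle to be the bookkeeping behind $(1)\Leftrightarrow(\star)$: one must correctly assemble three separate results — the $\delta$-sequence, the Hochschild–Serre sequence, and the transgression identification of Theorem \ref{dias if tra surj} — and verify that $\dim\ker\Inf=\dim(L'\cap Z)$ lands the count on the same identity produced by the Ganea sequence. A secondary subtlety is making the vector-space reading of $(3)$ precise, which I handle by noting that over $\F$ the relevant short exact sequence splits so that $M(L)\cong M(L/Z)/(L'\cap Z)$ is equivalent to the dimension identity $(\star)$ rather than to any canonical inclusion of $L'\cap Z$ into $M(L/Z)$.
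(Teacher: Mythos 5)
Your proof is correct. Note that the paper never actually writes this argument out --- it defers to the diassociative analogue \cite{mainellis batten di} --- so the fair comparison is with the machinery the paper assembles for this purpose, and your argument is built from exactly that machinery: the exact sequence of Theorem \ref{dias batten 4.1}, the Ganea sequence of Theorem \ref{dias batten 4.2} together with the identification $\ker\beta = \frac{F\lozenge T + T\lozenge F}{\FR}$ established in its proof, the Hochschild--Serre sequence combined with Theorem \ref{dias if tra surj}, the isomorphism $M(\cdot)\cong\cH^2(\cdot,\F)$, and Theorem \ref{dias batten 4.7}. Organizationally you do something a bit different from the Batten-style proofs in the cited literature, which chain implications among the four conditions: you collapse (1), (2), (3) onto the single dimension identity $(\star)$, and then settle (4) by computing $Z^*(L) = W/R$ for $W = \{s\in F : s\lozenge F + F\lozenge s \subseteq \FR\}$, so that $Z\subseteq Z^*(L)$ becomes $T\lozenge F + F\lozenge T \subseteq \FR$, which is precisely $\ker\beta = 0$. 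This consolidation is clean and the dimension counts are the same ones the referenced proofs perform; finite-dimensionality of $M(L)$, $M(L/Z)$, and $L'\cap Z$ (needed for every count) is guaranteed by Lemma \ref{dias batten 1.2} and the cohomological characterization, since $L$ and $L/Z$ are finite-dimensional.

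Two points deserve to be made explicit in a final write-up. First, your computation of $\dim\ima(\Inf)$ splices exactness statements from two different theorems, so you should note that the map $\Inf$ of Theorem \ref{dias batten 4.1} coincides with $\Inf_2$ of the Hochschild--Serre sequence (both are composition with the projection $L\to L/Z$ in each argument, so this is immediate, but it is the hinge of the $(1)\Leftrightarrow(\star)$ step). Second, your reading of condition (3) is the right one and worth stating as you do: multipliers are abelian, so (3) is a purely vector-space assertion; the quotient $M(L/Z)/(L'\cap Z)$ makes sense because the surjection $\gamma\colon M(L/Z)\to L'\cap Z$ from the Ganea sequence forces $\dim(L'\cap Z)\le \dim M(L/Z)$, and over a field the isomorphism in (3) is then equivalent to the identity $(\star)$.
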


\begin{thm}
Let $L$ be a triassociative algebra and $Z(L)$ be the center of $L$. If $Z(L)\subseteq Z^*(L)$, then $\U(Z(E)) = Z(L)$ for every stem extension $0\xrightarrow{} \ker \U\xrightarrow{} E\xrightarrow{\U} L\xrightarrow{} 0$.
\end{thm}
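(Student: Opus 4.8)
The plan is to establish the equality $\U(Z(E)) = Z(L)$ by proving the two inclusions separately. The inclusion $\U(Z(E)) \subseteq Z(L)$ is a general fact about surjective homomorphisms and needs no hypothesis, whereas the reverse inclusion $Z(L) \subseteq \U(Z(E))$ is exactly where the unicentrality hypothesis $Z(L) \subseteq Z^*(L)$ enters, via the definition of $Z^*(L)$ as an intersection.

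For $\U(Z(E)) \subseteq Z(L)$, I would argue by the same reasoning that shows $Z^*(L) \subseteq Z(L)$ at the start of Section \ref{batten 4}. If $z \in Z(E)$, then $z * e = e * z = 0$ for all $e \in E$ and all $* \in \{\vdash,\dashv,\perp\}$, so applying the homomorphism $\U$ gives $\U(z) * \U(e) = \U(e) * \U(z) = 0$. Since $\U$ is surjective, every element of $L$ is of the form $\U(e)$, and hence $\U(z) \in Z(L)$. This inclusion holds for any central extension, with no assumption on $L$.

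For the reverse inclusion I would invoke the definition of $Z^*(L)$ directly. Because the given extension $0 \to \ker\U \to E \xrightarrow{\U} L \to 0$ is stem, it is in particular central, so $\U(Z(E))$ is one of the images over which the intersection defining $Z^*(L)$ is taken; consequently $Z^*(L) \subseteq \U(Z(E))$. Combining this with the hypothesis $Z(L) \subseteq Z^*(L)$ yields $Z(L) \subseteq \U(Z(E))$, and together with the previous paragraph we conclude $\U(Z(E)) = Z(L)$. Alternatively, one can simply quote Theorem \ref{dias batten 4.7}, which already gives $\U(Z(E)) = Z^*(L)$ for every stem extension; under the hypothesis $Z(L) \subseteq Z^*(L)$ together with the always-valid $Z^*(L) \subseteq Z(L)$, this collapses to $Z(L) = Z^*(L) = \U(Z(E))$.

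I do not expect any genuine obstacle here. All of the substantive content is contained in the definition of $Z^*(L)$ and in the already-established Theorem \ref{dias batten 4.7}; the present statement merely records that once the unconditional inclusion $Z^*(L) \subseteq Z(L)$ becomes an equality, every stem quotient carries $Z(E)$ onto the full center $Z(L)$ rather than onto a proper subideal.
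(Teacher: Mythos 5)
Your proposal is correct. Note that the paper does not actually write out a proof of this theorem; it is presented as an immediate consequence of ``the preceding results,'' i.e., of Theorem \ref{dias batten 4.7}, which is precisely the one-line argument you give as your alternative: $\U(Z(E)) = Z^*(L)$ for every stem extension, which together with the unconditional $Z^*(L)\subseteq Z(L)$ and the hypothesis $Z(L)\subseteq Z^*(L)$ collapses to equality. Your primary argument is genuinely more elementary and in fact slightly stronger: the inclusion $\U(Z(E))\subseteq Z(L)$ needs only that $\U$ is a surjective homomorphism, and the inclusion $Z^*(L)\subseteq \U(Z(E))$ needs only that the extension is central, since $\U(Z(E))$ is then one of the terms in the intersection defining $Z^*(L)$. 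In particular, the chain $Z(L)\subseteq Z^*(L)\subseteq \U(Z(E))\subseteq Z(L)$ never invokes the stem condition $\ker\U\subseteq E'$, so it proves the conclusion for \emph{every central} extension, not merely stem ones, and it bypasses the free-presentation machinery behind Theorem \ref{dias batten 4.7} entirely. What the paper's route buys is brevity given what has already been established; what yours buys is self-containedness and a marginally more general statement.
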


\newpage

\end{document}